\newtheorem{thm}{Theorem}[section]
\newtheorem{lem}[thm]{Lemma}
\newtheorem{pro}[thm]{Proposition}
\theoremstyle{definition}
\newtheorem{defn}{Definition}[section]
\newtheorem{remk}{Remark}[section]
\newcommand{\N}{\mathbb N}
\newcommand{\M}{\mathbb M}
\newcommand{\R}{\mathbb R}
\numberwithin{equation}{section}
\begin{document}

% author information

       % first author

       \author{Seonghak Kim}
       \address{Institute for Mathematical Sciences\\ Renmin University of China \\  Beijing 100872, PRC}
       \email{kimseo14@gmail.com}

       % second author

       \author{Baisheng Yan}

       % the address where the research was carried out
       \address{Department of Mathematics\\ Michigan State University\\ East Lansing, MI 48824, USA}
       \email{yan@math.msu.edu}

       % current address, usually not needed because it is the same as the
       % regular address

       % title

      % \title[On one-dimensional  forward-backward parabolic equations]{On  one-dimensional forward-backward parabolic  equations  with   linear convection and reaction}
\title[Two-phase forward solutions for forward-backward equations]{Two-phase forward  solutions for one-dimensional forward-backward  parabolic equations with linear convection and reaction}

\subjclass[2010]{Primary 35M13. Secondary 74N30, 49K21, 35K20, 35D30}
\keywords{Forward-backward parabolic equations, linear convection and reaction, nonlocal differential inclusion, two-phase forward solutions,   almost transition gauge invariance, hysteresis loop, Baire's category method}

\begin{abstract}
We study the existence and properties of Lipschitz continuous weak solutions to the Neumann boundary value problem for a  class of one-dimensional quasilinear forward-backward diffusion equations   with linear convection and reaction. The diffusion flux function is assumed to be of a  forward-backward type that contains two forward-diffusion  phases. We prove that, for all smooth initial data, there exists at least one weak solution whose spatial derivative stays  in the two forward   phases. Also, for all smooth initial data that have a derivative value lying in a certain phase transition range, we show that there exist infinitely many such solutions that exhibit instantaneous phase transitions between the two forward phases. Moreover, we introduce the notion of transition gauge for such forward  solutions and prove that the gauge  of all constructed two-phase forward   solutions can be arbitrarily close to a certain fixed constant.
\end{abstract}
\maketitle

\section{Introduction}

In this paper, we study the existence and some properties of solutions for a class of one-dimensional  quasilinear diffusion equations  of the form
\begin{equation}\label{p-main}
u_t=(\sigma(u_x))_x+b(x,t)u_x+c(x,t)u+f(x,t)\;\;\mbox{in $\Omega\times(0,T)$},
\end{equation}
coupled with the initial and Neumann boundary conditions
\begin{equation}\label{p-main-ib-condition}
\left\{
\begin{array}{ll}
  u=u_0 & \mbox{on $\Omega\times\{t=0\}$}, \\
  u_x=0 & \mbox{on $\partial\Omega\times(0,T)$.}
\end{array}\right.
\end{equation}
Here, $\Omega=(0,1)\subset\R$ is the spatial domain, $T>0$ is any fixed number, $u_0=u_0(x)\in C^{2+\alpha}(\bar\Omega)$, for a given $0<\alpha<1$, is an initial datum satisfying the compatibility condition
\begin{equation}\label{u_0-compatibility-condition}
u_0'=0\;\;\mbox{on $\partial\Omega$},
\end{equation}
and $u=u(x,t)$ is a solution to problem (\ref{p-main})-(\ref{p-main-ib-condition}) in question.

The diffusion flux function $\sigma=\sigma(s)$ in (\ref{p-main})  is assumed to be of a forward-backward type, including the well-known  H\"ollig type \cite{Ho,HN}, that contains two forward-diffusion  phases.
 More precisely, we assume that there exist numbers $s_2>s_1>0$ and $\Lambda\ge\lambda>0$ such that
\begin{equation}\label{flux-condition}
\left\{
\begin{array}{l}
  \sigma\in C^{1+\alpha}((-\infty,s_1]\cup[s_2,\infty)), \\
  \sigma'>0\;\;\mbox{in $(-\infty,s_1)\cup(s_2,\infty)$}, \\
  \lambda\le\sigma'\le\Lambda\;\;\mbox{in $(-\infty,s_1/2)\cup(2s_2,\infty)$}, \\
  \sigma(0)=0,\;\mbox{and}\; \sigma(s_1)>\sigma(s_2)\ge 0.
\end{array}\right.
\end{equation}
 Note that $\sigma$ may not be decreasing or even continuous on $[s_1,s_2]$ and thus $\sigma(s_1)$ and $\sigma(s_2)$ may not be necessarily  local extrema. See Figure \ref{fig1}  for a typical graph of such  flux functions $\sigma$.

 %Note that $\sigma$ may not be smooth or decreasing on $[s_1,s_2]$ and that $\sigma(s_1)$ and $\sigma(s_2)$ are not necessarily  local extrema.

\begin{figure}[ht]
\begin{center}
\begin{tikzpicture}[scale = .8]
    \draw[->] (-1,0) -- (11.3,0);
	\draw[->] (0,-1) -- (0,5.2);
 \draw[dashed] (0,1.72)--(5.6,1.72);
 \draw[dashed] (0,3.26)--(9.05,3.26);
 \draw[dashed] (9,0)--(9,3.3);
    \draw[dashed] (5.6, 0)  --  (5.6, 1.72) ;
   \draw[dashed] (0.85, 0)  --  (0.85, 1.72) ;
\draw[dashed] (1.36, 0)  --  (1.36, 2.4) ;
%\draw[dashed] (0,1.94)  --  (6.7, 1.94) ;
%\draw[dashed] (6.7,0)  --  (6.7, 1.94) ;
   \draw[dashed] (2.8, 0)  --  (2.8, 3.3) ;
   \draw[thick] (-0.2,-0.5)  --  (0,0) ;
	\draw[thick]   (0, 0) .. controls (2,5) and  (4, 3)   ..(5,2);
        %  \draw[thick]   (1, 1.94) .. controls (1.2,2.3) and  (7.2, 2.4)   ..(8.5,2.9);
          %\draw[thick]   (10.0, 4.03) .. controls (10.3,4.3) and  (10.8, 4.4)   ..(11.1,4.5);
	\draw[thick]   (5, 2) .. controls  (6, 1) and (9,3) ..(11.1, 5 );
%\draw[thick]   (0.9,1.5) .. controls  (1,1.6) and (2,2.6) ..(11,2.8);
	\draw (11.3,0) node[below] {$s$};
    \draw (-0.3,0) node[below] {{$0$}};
    \draw (9.6, 4) node[above] {$\sigma(s)$};
 \draw (0, 3.3) node[left] {$\sigma(s_1)$};
 \draw (0, 1.5) node[left] {$\sigma(s_2)$};
% \draw (0, 1.94) node[left] {$r_1$};
% \draw (0, 2.9) node[left] {$r_2$};
  \draw (0, 2.4) node[left] {$r$};
% \draw[dashed] (0,1.5)--(9.2, 1.5);
%\draw[dashed] (1.88,2.9)--(1.88,0);
\draw[dashed] (7.65,2.4)--(0,2.4);
\draw[dashed] (7.65,2.4)--(7.65, 0);
   \draw (5.6, 0) node[below] {$s_2$};
  % \draw (3.5, 2.2) node[above] {$\tilde\sigma(s)$};
   \draw (2.8, 0) node[below] {$s_1$};
   \draw (0.7, 0) node[below] {$s_1^*$};
  % \draw (1.2, 0) node[below] {$s_{r_1}^-$};
     \draw (1.5, 0) node[below] {$s_{r}^-$};
   \draw (9.1, 0) node[below] {$s_2^*$};
   %\draw (10.0, 0) node[below] {$s_3$};
 %  \draw (8.5, 0) node[below] {$s_{r_2}^+$};
   \draw (7.65, 0) node[below] {$s_{r}^+$};
% \draw[dashed] (0.9,0)--(0.9, 1.5);
    \end{tikzpicture}
%\end{center}
\caption{\emph{A typical graph of flux functions $\sigma.$   Here, $\sigma(s_1^*)=\sigma(s_2)$, $\sigma(s_2^*)=\sigma(s_1)$ and $\sigma(s_r^\pm)=r$.}
}
\label{fig1}
\end{center}
\end{figure}
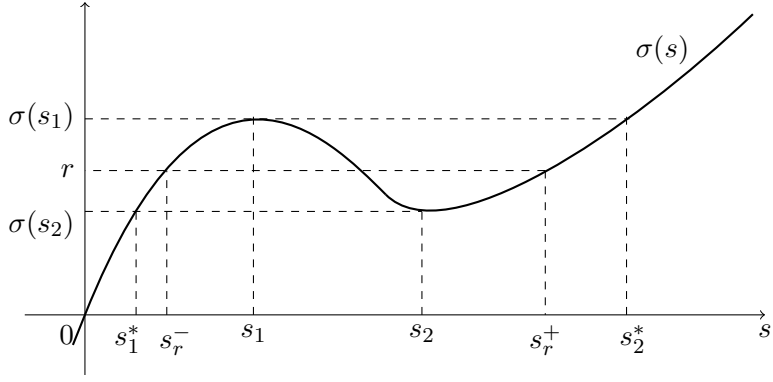

Concerning the convection coefficient  $b=b(x,t)$,   reaction coefficient $c=c(x,t)$ and the source term $f=f(x,t)$  in (\ref{p-main}), we impose the following conditions:
\begin{equation}\label{coefficient-condition}
\left\{
\begin{array}{l}
  b\in C^{\alpha,\frac{\alpha}{2}}(\bar\Omega_T)\cap C^{1,0}(\bar\Omega_T),\; c,f\in C^{\alpha,\frac{\alpha}{2}}(\bar\Omega_T),\;\mbox{and} \\
  \mbox{$c(x,t)=b_x(x,t)+d(t)$ $\forall(x,t)\in\bar\Omega_T$ for some $d\in C([0,T])$},
\end{array}\right.
\end{equation}
where $\Omega_T:=\Omega\times(0,T).$ For example, one may choose $b\in C^{1+\alpha,\frac{1+\alpha}{2}}(\bar\Omega_T)$, $d\in C^{\frac{\alpha}{2}}([0,T])$ and then take $c=b_x+d(t).$
In particular, when $b=b(t),c=c(t)\in C^{\frac{\alpha}{2}}([0,T])$, condition (\ref{coefficient-condition}) is satisfied.

Equation (\ref{p-main}) with such a function $\sigma$ of the type (\ref{flux-condition})  arises in the theory of phase transitions in thermodynamics \cite{Dy,Ho,HN,Tr}. With different types of nonmonotone flux  functions $\sigma$, equations of the type (\ref{p-main}) also stem from mathematical models of stratified turbulent flows \cite{BBDU}, population dynamics \cite{Pa}, image processing \cite{PM}, and gradient flows associated with nonconvex energy functionals \cite{BFG, Sl}.

Many  theoretical or numerical results on such forward-backward equations or related pseudoparabolic regularizations have been studied in a large number of literatures; see \cite{BBDU, BFG, Es,EG,GG1,GG2,Gu,Ho,HN,KK,KY,KY1,MTT, V,Zh, Zh1} and  references therein. For example, the existence and uniqueness of a so-called {\em two-phase entropy solution}  have been  proved in \cite{MTT} for the Cauchy problem of  equation $v_t=(\sigma(v))_{xx}$ with piecewise linear function $\sigma$ for certain {\em discontinuous} initial data $v(x,0)$. Such an equation can be related to equation $u_t=(\sigma(u_x))_x$ by  the stream function relation $v=u_x, \, (\sigma(v))_x=u_t.$ However, the notion  of  two-phase entropy solutions and related result cannot handle smooth initial data.
On the other hand, for all smooth initial data $u_0$, the existence  of Lipschitz weak solutions  was first established in \cite{Zh1} for problem (\ref{p-main})-(\ref{p-main-ib-condition}) with $\sigma$ of the type (\ref{flux-condition}) and  $b=c=f=0$ in $\Omega_T$, thereby generalizing the result of \cite{Ho}.

In this  paper, we prove that  for each smooth initial  datum $u_0$,  there exists a Lipschitz weak solution  $u$ to  problem (\ref{p-main})-(\ref{p-main-ib-condition}) that  satisfies $u_x\in (-\infty,s_1)\cup (s_2,\infty)$ a.e.\,in the space-time domain $\Omega_T$ (thus called a {\em forward solution}) and is smooth  in some part of $\Omega_T$ near its vertical boundary $\partial\Omega\times[0,T]$. Furthermore, if  $s_1^*<u_0'(x_0)<s_2^*$ for some $x_0\in\Omega$, where $s_1^*\in [0,s_1)$ and $s_2^*\in(s_2,\infty)$ are the unique  numbers with $\sigma(s_1^*)=\sigma(s_2)$ and $\sigma(s_2^*)=\sigma(s_1)$ (see Figure \ref{fig1}), then we prove the existence of  infinitely many such {forward}  solutions  $u$  to (\ref{p-main})-(\ref{p-main-ib-condition}) with  the property that
\begin{equation}\label{2-phase-condition}
\begin{cases}
\big|\{(x,t)\in \Omega_T\,|\, s_1^*<u_x(x,t)<s_1\}\big|>0,\\
\big|\{(x,t)\in \Omega_T\,|\, s_2<u_x(x,t)<s_2^*\}\big|>0.\end{cases}
\end{equation}
Therefore, these solutions must exhibit instantaneous phase transitions between the two forward  phases, and we will call  such solutions {\em two-phase forward solutions.} In fact, these two-phase forward  solutions will also satisfy a much stronger condition in terms of  the {\em  transition gauge} defined later -- they are {\em almost  transition gauge   invariant.}

We remark that property (\ref{2-phase-condition}) is actually strengthened in Theorem \ref{thm:main-detailed}  so that   the constructed solutions $u$ satisfy that $(u_x,\sigma(u_x))$ stays in both parts of the set $K'$ defined in (\ref{set-K'U'}) below on sets of positive measure in $\Omega_T$ and thus internally give rise to a {\em hysteresis loop}  \cite{V} that is the boundary of the set $U'$ in (\ref{set-K'U'}); this is  typical in the first-order phase transitions similar to  the Ericksen bar problem \cite{Er}.

The paper is organized as follows. Our main results are stated in  Section \ref{sec:main-results} as  Theorem \ref{thm:main-simple} and in a more detailed fashion as Theorem \ref{thm:main-detailed} after the discussion of  the main idea of  a {\em nonlocal} differential inclusion and  the introduction of  {\em subsolutions} of the inclusion and  the notion of  {\em transition gauge} for these subsolutions and for weak solutions of  (\ref{p-main}).  In Section \ref{sec:prelim}, a special subsolution $w^*$ is constructed by solving a modified parabolic problem. Section \ref{admissible-set} defines   a suitable admissible set of subsolutions  so that a pivotal density lemma, Lemma \ref{lem:density-fact}, can be proved later.  In Section \ref{sec:proof-main},  the proof of Theorem \ref{thm:main-detailed} is completed in the framework of Baire's category method with the help of Lemma \ref{lem:density-fact}. As the last part of the paper, the long proof of Lemma \ref{lem:density-fact} is given in  Section \ref{sec:density-proof} following  a technical lemma  in Section \ref{lem:tech}.

In closing this section, we introduce some notations. For a measurable set $E\subset\R^n$ $(n=1,2)$, we denote by $|E|$ its Lebesgue measure. For a matrix $A=(a_{ij})$ in the space $\M^{2\times 2}$ of $2\times 2$ real matrices, we use $|A|$ to denote its Hilbert-Schmidt norm, that is, $|A|=(\sum_{i,j=1}^2 a_{ij}^2)^{1/2}$. For a vector $a=(a_1,a_2)\in\R^2$, $|a|=(\sum_{i=1}^2 a_{i}^2)^{1/2}$ is its Euclidean norm. We mainly follow the notations in the book \cite{LSU} for function spaces, with one exception  that the letter $C$ is used instead of $H$ regarding suitable (parabolic) H\"older spaces.
For integers $k,l\ge 0$ with $2l\le k$, we denote by $C^{k,l}(\bar\Omega_T)$   the space of functions $u\in C^0(\bar\Omega_T)$   such that $\partial^\beta_x \partial^j_t u\in C^0(\bar\Omega_T)$   for all multiindices $|\beta|\le k$ and integers $0\le j\le l$ with $|\beta|+2j\le k$.

\section{Main results}\label{sec:main-results}

To  fix some notation, let $s_1^*\in[0,s_1)$ and $s_2^*\in(s_2,\infty)$ denote the unique numbers with $\sigma(s_1^*)=\sigma(s_2)$ and $\sigma(s_2^*)=\sigma(s_1)$, respectively. For each $r\in[\sigma(s_2),\sigma(s_1)],$ let $s^+_r\in[s_2,s_2^*]$ and $s^-_r\in[s_1^*,s_1]$ be the unique numbers such that $\sigma(s_r^\pm)=r$. (See Figure \ref{fig1}.)

\begin{defn} A function $u\in W^{1,\infty}(\Omega_T)$ is a \emph{Lipschitz solution} to problem (\ref{p-main})-(\ref{p-main-ib-condition}) provided that equality
\begin{equation}\label{def:solution}
\begin{split}
\int_0^s\int_0^1 \big(u\zeta_t-\sigma(u_x) & \zeta_x +(bu_x+cu+f)\zeta\big)\,dxdt \\
& = \int_0^1 \big(u(x,s)\zeta(x,s)-u_0(x)\zeta(x,0)\big)\,dx
\end{split}
\end{equation}
holds for each $\zeta\in C^\infty(\bar\Omega_T)$ and each $s\in[0,T]$. A Lipschitz solution $u$ to  (\ref{p-main})-(\ref{p-main-ib-condition}) satisfying $u_x\in (-\infty,s_1)\cup (s_2,\infty)$ a.e.\,in $\Omega_T$ will be called a {\em forward  solution}.
\end{defn}

Since all solutions $u$ to problem (\ref{p-main})-(\ref{p-main-ib-condition}) considered in this paper will satisfy $u_x\not\in(s_2-\delta_u,s_2+\delta_u)$ a.e. in $\Omega_T$ for some $\delta_u>0$, we have $u_x=0$ if and only if $\sigma(u_x)=0$ even in the case that $\sigma(s_2)=0$; thus equality (\ref{def:solution}) indeed reflects the Neumann boundary condition in (\ref{p-main-ib-condition}).

We now state our  main existence results in the following simplified version.

\begin{thm}\label{thm:main-simple}
For any initial datum $u_0\in C^{2+\alpha}(\bar\Omega)$ satisfying the compatibility condition (\ref{u_0-compatibility-condition}), there exists at least one forward   solution to problem (\ref{p-main})-(\ref{p-main-ib-condition}).  Furthermore, if, in addition, $s_1^*<u_0'(x_0)<s_2^*$ for some $x_0\in\Omega$, then there are infinitely many  forward   solutions $u$  satisfying (\ref{2-phase-condition}).
\end{thm}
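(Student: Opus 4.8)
The plan is to recast problem (\ref{p-main})--(\ref{p-main-ib-condition}) as a \emph{nonlocal} differential inclusion and resolve it by Baire's category method, following the roadmap of the introduction. A forward solution $u$ corresponds to a pair $w=(u,v)\in W^{1,\infty}(\Omega_T;\R^2)$ in which $v$ stands in for the flux $\sigma(u_x)$: one asks that $w$ satisfy the \emph{linear} equation $u_t=v_x+bu_x+cu+f$ in the weak sense of (\ref{def:solution}) (with $\sigma(u_x)$ replaced by $v$), together with the initial and Neumann conditions, plus the \emph{pointwise nonconvex} constraint $(u_x,v)\in K$ a.e.\ in $\Omega_T$, where $K=\{(s,\sigma(s)):s\in(-\infty,s_1)\cup(s_2,\infty)\}$ is the graph of $\sigma$ over the two forward phases. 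The inclusion is \emph{nonlocal} because $v$ is recovered from $u$ only through an $x$-antiderivative of $u_t-bu_x-cu-f$; the structural hypothesis $c=b_x+d(t)$ in (\ref{coefficient-condition}) is what keeps this tractable, since it lets the lower-order terms be written as $(bu)_x+du+f$. Relaxing the pointwise constraint so that $(u_x,v)$ is merely required to lie in the closed region $\overline{U'}$ bounded by the hysteresis loop (with $K'$ the relevant part of $\partial U'$, cf.\ (\ref{set-K'U'})) produces the class of \emph{subsolutions}; a subsolution whose derivative data returns to $K$ a.e.\ is a genuine forward solution, and the transition gauge quantifies how that data is distributed along the loop.

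First I would construct the distinguished subsolution $w^*=(u^*,v^*)$ of Section \ref{sec:prelim} by solving a modified, uniformly parabolic Neumann problem in which $\sigma$ is replaced by a monotone function through the transition range: this $w^*$ satisfies the linear constraint exactly, takes $u_0$ at $t=0$ (using the compatibility condition), is classical with $(u^*_x,v^*)\in K$ on a strip near the lateral boundary $\partial\Omega\times[0,T]$, and has $(u^*_x,v^*)$ in the open region $U'$ exactly on the set where $u^*_x$ lies in the transition range $(s_1^*,s_2^*)$. Under the extra hypothesis $s_1^*<u_0'(x_0)<s_2^*$, continuity then forces $(u^*_x,v^*)$ to sit strictly inside $U'$ --- hence at positive distance from $K$ --- on a space--time neighborhood of $(x_0,0)$, and it is there that genuine two-phase behavior will be manufactured. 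Next, in the spirit of Section \ref{admissible-set}, I would fix an admissible class $\mathcal A$ of subsolutions that agree with $w^*$ near the lateral boundary and at $t=0$, keep $(u_x,v)\in\overline{U'}$ a.e., satisfy the same linear constraint, and obey uniform bounds; its closure $\overline{\mathcal A}$ in a metric that is uniform in the $u$-component and weak in the $v$-component is a complete metric space, nonempty since $w^*\in\mathcal A$. On $\overline{\mathcal A}$ the defect functional $J(w)=\int_{\Omega_T}\dist((u_x,v),K)\,dxdt$ is of Baire class one --- it is the pointwise limit of the continuous functionals obtained by mollifying $(u_x,v)$ --- so its set of points of continuity is residual.

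The decisive step is the density lemma, Lemma \ref{lem:density-fact}: given a subsolution $w\in\mathcal A$ whose defect is bounded below on a set of positive measure and given $\varepsilon>0$, there exists $w'\in\mathcal A$ with $\dist(w',w)<\varepsilon$ and $J(w')\le J(w)-\eta$ for a definite $\eta>0$. I would prove this by a localized convex-integration step: on small cells where $(u_x,v)$ sits strictly inside $U'$, one superposes a finely oscillating correction whose derivative data shuttles between the two forward branches $s=s^-_r$ and $s=s^+_r$ at the level $r$ equal to the local value of $v$, designed so that the correction solves the \emph{homogeneous} linearized equation (thereby preserving the nonlocal linear constraint), vanishes near the cell boundary (preserving boundary data and $\overline{U'}$-membership), is small in the chosen metric, and drives $(u_x,v)$ a definite distance closer to $K$ on most of the cell; patching these corrections over a fine grid and iterating yields the lemma. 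The technical lemma of Section \ref{lem:tech} supplies the building blocks with the required estimates.

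With Lemma \ref{lem:density-fact} in hand, every continuity point of $J$ must satisfy $J=0$ --- otherwise the lemma would produce nearby subsolutions with strictly smaller defect, contradicting continuity --- so the residual set consists of forward solutions, and, $\overline{\mathcal A}$ having no isolated points, it is dense and uncountable; this proves the first assertion of Theorem \ref{thm:main-simple}. For the second, one notes that the oscillating building blocks necessarily spend positive measure on each transition interval $(s_1^*,s_1)$ and $(s_2,s_2^*)$; restricting the category argument to the neighborhood of $(x_0,0)$ on which $w^*$ lies strictly inside $U'$ yields a residual --- hence infinite --- set of forward solutions obeying (\ref{2-phase-condition}), indeed the sharper hysteresis-loop and almost transition gauge invariance statements of Theorem \ref{thm:main-detailed}. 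The main obstacle, as always in this method, is Lemma \ref{lem:density-fact}: engineering the oscillating corrections so that they simultaneously respect the nonlocal linear constraint with $x,t$-dependent coefficients $b,c,f$, remain inside $\overline{U'}$, preserve regularity near the lateral boundary, and give a uniform lower bound on the decrease of $J$ --- this is the content of the long argument in Section \ref{sec:density-proof}.
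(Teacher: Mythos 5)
Your treatment of the second assertion follows essentially the paper's own route: the stream-function/nonlocal inclusion reformulation, the subsolution $w^*$ obtained from a uniformly parabolic modification of $\sigma$ through the transition range, an admissible class pinned to $w^*$ with the transition gauge built into its definition, and a Baire category argument powered by a density (defect-reduction) lemma proved by compactly supported oscillations between the two branches at the frozen flux level. Two remarks there: the paper works with the $L^\infty$-closure of $\mathcal{A}$ and the Baire-one gradient operator rather than a Baire-one defect functional, an inessential variant; and your closing justification of (\ref{2-phase-condition}) --- that ``the oscillating building blocks necessarily spend positive measure on each transition interval'' --- is not by itself enough for the limit function, since a.e.\ convergence of gradients could in principle favor one branch. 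What actually forces both branches to be occupied is the gauge constraint $|\Gamma^E_w-\Gamma^E_{w^*}|<\epsilon/2$ carried inside $\mathcal{A}$ together with the identity $|F^+_T|=\gamma^E_u|\Omega^2_T|$ and $\Gamma^E_{w^*}\in(0,1)$; you do set up this machinery, so this is a matter of invoking it rather than a missing idea.

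The genuine gap is the first assertion. Your category argument is mounted on the region where $(u^*_x,v^*)$ lies strictly inside $U'$, which you locate near $(x_0,0)$ only under the extra hypothesis $s_1^*<u_0'(x_0)<s_2^*$; for an initial datum with, say, $u_0'\le s_1^*$ on all of $\Omega$ you give no argument, yet the statement demands a forward solution for every compatible $u_0$. The paper handles this by an explicit reduction: if $u_0'(x_1)>s_1^*$ somewhere, then $u_0'=0$ on $\partial\Omega$ already produces a point with $u_0'\in(s_1^*,s_2^*)$, so the second assertion applies; if instead $u_0'\le s_1^*$ everywhere, one solves the problem with a uniformly monotone extension $\bar\sigma$ agreeing with $\sigma$ on $(-\infty,(s_1^*+s_1)/2]$, observes that either the classical solution keeps $u_x\le s_1^*$ (and is then itself a forward solution for the original $\sigma$), or there is a time $t_0$ at which $u_x(\cdot,t_0)$ enters $(s_1^*,(s_1^*+s_1)/2)$ while $u_x\le(s_1^*+s_1)/2$ up to that time, and then restarts from $u(\cdot,t_0)$, applies the second assertion on $\Omega\times(t_0,T)$, and glues the two pieces at $t=t_0$. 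Alternatively you could argue a dichotomy directly on your modified solution --- either its gradient never enters the transition range, in which case it is a forward solution, or the transition set $\Omega^2_T$ is nonempty (necessarily away from $t=0$ in this case) and your machinery applies there --- but some such case analysis must be supplied; as written, the claim that the residual set ``proves the first assertion'' covers only data already satisfying, or forced into, the transition condition.
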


The first part of this theorem follows easily from its second part.  For example,   if the initial datum $u_0$ satisfies $u_0'(x_1)>s_1^*$ for some $x_1\in \Omega,$ then by (\ref{u_0-compatibility-condition}) there must be a point $x_0\in\Omega$ such that $u_0'(x_0)\in (s_1^*,s_2^*);$ thus, in this case, the second part of the theorem implies the existence of forward  solutions to problem (\ref{p-main})-(\ref{p-main-ib-condition}).   Next, assume that  $u_0'(x)\le s_1^*$ for all $x\in\Omega.$ We then solve problem (\ref{p-main})-(\ref{p-main-ib-condition}),  with $\sigma$ replaced by a function $\bar\sigma$ on $\R$ whose derivative always lies in between two positive numbers such that it agrees with $\sigma$ on $(-\infty,(s_1^*+s_1)/2]$, to get a classical solution $u$ in $\Omega_T.$ If $u_x(x,t)\le s_1^*$ for all $(x,t)\in \Omega_T$, then $u$ itself is a (classical) forward  solution to  problem (\ref{p-main})-(\ref{p-main-ib-condition})  with the original $\sigma.$ Otherwise,  due to the Neumann boundary condition, we can choose a $0<T'<T$ such that $u_x(x,t)\le (s_1^*+s_1)/2$ for all $(x,t)\in\Omega\times(0,T')$ and that $u_x(x_0,t_0)\in (s_1^*,(s_1^*+s_1)/2)$ for some $(x_0,t_0)\in\Omega\times(0,T')$.    Then we use $u(\cdot,t_0)$ as an initial datum and apply the second part of the theorem to get forward  solutions $\tilde u$ to (\ref{p-main})-(\ref{p-main-ib-condition}) in $\Omega\times (t_0,T)$ with the original $\sigma$. Piecing $u$ and $\tilde u$ together at $t=t_0$, we obtain  forward  solutions to  (\ref{p-main})-(\ref{p-main-ib-condition})  in $\Omega_T$ with the original $\sigma$.

The second part of Theorem \ref{thm:main-simple} follows from our more detailed statement, Theorem \ref{thm:main-detailed}, to be given below. For this, we first recast equation (\ref{p-main}) as a nonlocal differential inclusion and introduce the concept of subsolutions and transition gauge.

\subsection{Nonlocal differential inclusion} Our key approach to problem  (\ref{p-main})-(\ref{p-main-ib-condition}) is   to treat  equation (\ref{p-main})   as a new type of partial differential inclusions involving a \emph{nonlocal} operator.

To ascertain this treatment, we first let
\[
F(x,t) :=\int_0^x f(y,t)\,dy
\]
 and rewrite equation (\ref{p-main})  as
\[
u_t=\big( \sigma(u_x)+ bu + \mathcal{P}_u+ F\big)_x,
\]
where the {\em potential operator} ${\mathcal P}$ at $u$ is defined by
\[
\mathcal{P}_u(x,t) :=\int_0^x\big( c(y,t)-b_y(y,t) \big)u(y,t)\,dy \quad \forall (x,t)\in \Omega_T.
\]
Therefore,  $u\in W^{1,\infty}(\Omega_T)$ is a  solution of (\ref{p-main}) in the sense of distributions if and only if there exists a stream function $v\in W^{1,\infty}(\Omega_T)$ satisfying
\begin{equation}\label{stream-function-v}
v_x=u,\;\; v_t=\sigma(u_x)+bu+\mathcal{P}_u+F  \;\; \mbox{a.e.\,in $\Omega_T$;}
\end{equation}
such a function $v$ is necessarily unique up to a constant.

Next, let $\Sigma_0=\{(s,\sigma(s))\in\R^2\,|\,s\in\R \},$ which is simply  the graph of the function $\sigma$.
For each $(x,t)\in\Omega_T$ and each $u\in W^{1,\infty}(\Omega_T)$, define the matrix set
\[
\Sigma((x,t);u)  = \left\{ \begin{pmatrix} s & c \\ u(x,t) & q \end{pmatrix}\in\M^{2\times 2} \,\Big|\, \begin{array}{l}
                                           c\in\R, \big(s,q-b(x,t)u(x,t) \\
                                           -\mathcal{P}_u (x,t)-F(x,t)\big)\in \Sigma_0
                                         \end{array}
               \right\}.
\]
Then for  a vector function $w=(u,v)\in W^{1,\infty}(\Omega_T;\R^{2})$, system (\ref{stream-function-v}) is equivalent to the following differential inclusion:
\begin{equation}\label{initial-form-matrix-set}
\nabla w(x,t)=\begin{pmatrix} u_x(x,t) & u_t(x,t)\\v_x(x,t)&v_t(x,t)\end{pmatrix}\in\Sigma((x,t);u), \;\;\mbox{a.e. $(x,t)\in\Omega_T$}.
\end{equation}
In this manner, (\ref{p-main}) becomes equivalent to differential inclusion (\ref{initial-form-matrix-set}).

\begin{remk} We remark that the matrix sets in differential inclusion (\ref{initial-form-matrix-set}) are depending not only on the space-time variable $(x,t)$ but also on the input function $u$ in a nonlocal fashion.   So we may call such an inclusion  a \emph{nonlocal} partial differential inclusion, which, to our best knowledge, has not been studied in the well-known literatures on partial differential inclusion problems \cite{DM1,MSy}; however, see \cite{Sy} for a treatment of a nonlocal problem using differential inclusion.
\end{remk}

\subsection{Subsolutions and transition gauge}  Let
\[
\begin{split}
K_0 & = \{ (s,\sigma(s))\in\R^2\,|\, s\in [s_1^*,s_1] \cup [s_2,s_2^*] \}, \\
U_0 & = \{ (s,r)\in\R^2\,|\, \sigma(s_2)<r<\sigma(s_1),\;  s^-_{r}<s<s^+_r \}.
\end{split}
\]
A function  $w=(u,v)\in W^{1,\infty}(\Omega_T;\R^2)$ is called a {\em subsolution} of differential inclusion (\ref{initial-form-matrix-set}) if
\[
v_x=u\;\;\mbox{and}\;\; (u_x, v_t-bu-\mathcal{P}_u-F)\in  \Sigma_0 \cup  {U_0}\;\;\mbox{a.e.\,in $\Omega_T.$}
\]
In this case, we define the {\em transition set} of $w$  by
\begin{equation}\label{sub-1}
O^w=\{(x,t)\in \Omega_T\;|\; (u_x, v_t-bu-\mathcal{P}_u-F)\in   K_0\cup {U_0} \}.
\end{equation}
Note that the transition set $O^w$ may be a null set; that is, $|O^w|=0.$

Let $w=(u,v)$  be a subsolution of (\ref{initial-form-matrix-set}). Define a function $Z_w\colon O^w\to [0,1]$ by
\begin{equation}\label{Zw}
Z_w(x,t) =\frac{u_x(x,t)-S^-_w(x,t)}{S^+_w(x,t) -S^-_w(x,t)},
\end{equation}
where
\[
 S^\pm_w(x,t) = s^\pm_{v_t(x,t)-b(x,t)u(x,t)-\mathcal{P}_u(x,t)-F(x,t)};
 \]
then, for each non-null subset $E$ of $O^w$, we  define the  \emph{transition gauge} of $w$  on $E$  by
\begin{equation}\label{fractional-distance}
\Gamma_{w}^E =\frac{1}{|E|}\int_E Z_w(x,t)  \,dxdt.
\end{equation}
If $E$ is any measurable subset of $\Omega_T$ with $|E|=0$ or $E\subset\Omega_T\setminus O^w$, we define $\Gamma_{w}^E=-1$.
In case of a non-null set $E\subset O^w$, note that $\Gamma_w^E \in [0,1]$ and that $\Gamma_w^E=1$ (0, resp.)  if and only if $(u_x, v_t-bu-\mathcal{P}_u-F)$ belongs to the right-branch (left-branch, resp.) of ${K_0}$ a.e.\,in $E.$ Therefore, $\Gamma_w^E$ measures the tendency of phases of the subsolution $w$ towards the right-branch of $K_0$ on $E$.

A function $u\in W^{1,\infty}(\Omega_T)$ is called a {\em subsolution} of equation (\ref{p-main}) if there exists a function $v\in W^{1,\infty}(\Omega_T)$ so that $w=(u,v)$ is a subsolution  of (\ref{initial-form-matrix-set}).

Note that if $u\in W^{1,\infty}(\Omega_T)$ is a   solution  of (\ref{p-main}) in the sense of distributions, then, with the stream function $v\in W^{1,\infty}(\Omega_T)$ defined by (\ref{stream-function-v}) (unique up to a constant), the function $w=(u,v)$ is a  solution and thus a subsolution  of (\ref{initial-form-matrix-set}). In this case, we define the {\em transition gauge} of $u$ on any measurable set $E\subset\Omega_T$ by
\begin{equation}\label{transition-gauge}
\gamma_u^E=\Gamma_w^E.
\end{equation}

\begin{remk} For the purpose of this paper,  the transition gauge $\gamma_u^E$ is defined only for (weak) solutions $u$ of equation (\ref{p-main}), not for general subsolutions of (\ref{p-main}).  In fact, in what follows, we only deal with the transition gauge $\Gamma_w^E$ for subsolutions $w$ of differential inclusion (\ref{initial-form-matrix-set})  with $|O^w|>0$ on non-null sets $E\subset O^w$.
\end{remk}

\subsection{Two-phase forward  solutions with almost transition gauge invariance} Assume that the initial datum $u_0\in C^{2+\alpha}(\bar\Omega)$ satisfies  the compatibility condition (\ref{u_0-compatibility-condition}) and  the transition condition
 \begin{equation}\label{trans-0}
 \mbox{$s_1^*<u_0'(x_0)<s_2^*$ \; for some $x_0\in\Omega$. }
 \end{equation}
Then our detailed version for the main result, Theorem \ref{thm:main-simple}, can be formulated as follows.

\begin{thm}\label{thm:main-detailed} Let $r_1,r_2$ be any two numbers with $\sigma(s_2)<r_1<r_2<\sigma(s_1)$ and  $s^-_{r_1}<u_0'(x_0)<s^+_{r_2}$.
Then there exists a subsolution $w^*=(u^*,v^*)\in C^{2,1}(\bar\Omega_T)\times C^{3,1}(\bar\Omega_T)$ of differential inclusion (\ref{initial-form-matrix-set}) such that the sets
%\begin{equation}\label{sub-domains}
\[
\begin{split}
\Omega^1_T   = \{(x,t)\in\Omega_T \,|\, u^*_x(x,t)<s^-_{r_1} \}, \; \; &\Omega^3_T   = \{(x,t)\in\Omega_T \,|\, u^*_x(x,t)>s^+_{r_2} \}, \\
\Omega^2_T   = \{(x,t)\in\Omega_T \,|\, s^-_{r_1}&<u^*_x(x,t)<s^+_{r_2} \},\\
\Omega^{0,-}_T  = \{(x,t)\in\Omega_T \,|\, u^*_x(x,t) = s^-_{r_1} \},\;\;
& \Omega^{0,+}_T  = \{(x,t)\in\Omega_T \,|\, u^*_x(x,t)= s^+_{r_2}\}
\end{split}
\]
%\end{equation}
satisfy  $E:=\Omega^2_T\ne\emptyset$, $0<\Gamma_{w*}^E<1$, and
\begin{equation}\label{u-star-near-bdry}
\left\{
\begin{array}{l}
  ((0,\delta^*)\cup(1-\delta^*,1))  \times(0,T)\subset \Omega^1_T\;\;\mbox{for some $\delta^*>0$}, \\
  \partial\Omega^1_T\cap\{(x,0)\,|\,x\in\Omega\}  = \{(x,0)\,|\,x\in\Omega, u_0'(x)<s^-_{r_1}\}, \\
  \partial\Omega^2_T\cap\{(x,0)\,|\,x\in\Omega\}  = \{(x,0)\,|\,x\in\Omega, s^-_{r_1}<u_0'(x)<s^+_{r_2}\}, \\
  \partial\Omega^3_T\cap\{(x,0)\,|\,x\in\Omega\}  = \{(x,0)\,|\,x\in\Omega, u_0'(x)>s^+_{r_2}\}.
\end{array}\right.
\end{equation}
 Furthermore, for each $\epsilon>0$, there exist infinitely many Lipschitz solutions $u$ to problem (\ref{p-main})-(\ref{p-main-ib-condition}) satisfying the following properties:
\begin{itemize}
\item [(a)] $u=u^*$ in $\Omega^1_T\cup\Omega^3_T$, $\|u-u^*\|_{L^\infty(\Omega_T)}<\epsilon$, $\|u_t-u^*_t\|_{L^\infty(\Omega_T)}<\epsilon,$
\item [(b)] $u_x<s^-_{r_1}$ in $\Omega^1_T$, $u_x\in [s^-_{r_1},s^-_{r_2}]\cup [s^+_{r_1},s^+_{r_2}]$ a.e.\,in $\Omega^2_T$, $u_x>s^+_{r_2}$ in $\Omega^3_T$,
\item [(c)] $u_x=s^-_{r_1}$ a.e.\,in $\Omega_T^{0,-}$,   $u_x= s^+_{r_2}$ a.e.\,in $\Omega_T^{0,+},$
\item [(d)] $|F^+_T|=\gamma_u^E|\Omega^2_T|$, $|F^-_T|=(1-\gamma_u^E)|\Omega^2_T|$, and $|\gamma_u^E-\Gamma_{w^*}^{E}|<\epsilon$,
where  $F^\pm_T:=\{(x,t)\in\Omega^2_T\,|\,u_x(x,t)\in[s^\pm_{r_1},s^\pm_{r_2}]\}.$
\end{itemize}
\end{thm}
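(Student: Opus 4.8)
The plan is to prove Theorem \ref{thm:main-detailed} by the Baire category method applied to a carefully chosen admissible class of subsolutions of the nonlocal inclusion (\ref{initial-form-matrix-set}). First I would construct the base subsolution $w^*=(u^*,v^*)$ by solving a uniformly parabolic regularization of (\ref{p-main}): replace $\sigma$ on the ``spinodal'' interval by a monotone interpolant $\bar\sigma$ whose derivative stays between two positive constants and which agrees with $\sigma$ outside a neighborhood of $[s^-_{r_1},s^+_{r_2}]$, and solve the resulting quasilinear Neumann problem with initial datum $u_0$. By standard parabolic theory (the LSU estimates) this yields a classical solution $u^*\in C^{2,1}(\bar\Omega_T)$; setting $v^*$ via (\ref{stream-function-v}) with $\sigma$ replaced by $\bar\sigma$ makes $w^*$ a subsolution of (\ref{initial-form-matrix-set}) since $(u^*_x,\bar\sigma(u^*_x))$ lies on $\Sigma_0\cup U_0$. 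The transition condition (\ref{trans-0}) together with continuity of $u^*_x$ up to $t=0$ (where $u^*_x=u_0'$) and the Neumann boundary condition forces $u^*_x(x_0,t)\in(s^-_{r_1},s^+_{r_2})$ for small $t$, giving $E=\Omega^2_T\ne\emptyset$; the structure of $\bar\sigma$ near the ends of the spinodal interval, chosen so that $\bar\sigma$ coincides with $\sigma$ there, gives $0<\Gamma^E_{w^*}<1$ and the boundary identifications (\ref{u-star-near-bdry}), with $\delta^*$ coming from parabolic continuity near $\partial\Omega\times[0,T]$ where $u_0'$ is strictly below $s^-_{r_1}$.

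Next I would fix $\epsilon>0$ and define the admissible set $\mathcal{A}_\epsilon$ (as in Section \ref{admissible-set}) to consist of subsolutions $w=(u,v)$ of (\ref{initial-form-matrix-set}) that agree with $w^*$ outside $\overline{\Omega^2_T}$ (hence in $\Omega^1_T\cup\Omega^3_T$), satisfy the pinning conditions $u_x=s^-_{r_1}$ on $\Omega^{0,-}_T$ and $u_x=s^+_{r_2}$ on $\Omega^{0,+}_T$, keep $(u_x,v_t-bu-\mathcal{P}_u-F)$ in $U_0$ restricted to the relevant sub-box on $\Omega^2_T$, obey the $L^\infty$ bounds $\|u-u^*\|_{L^\infty}<\epsilon$, $\|u_t-u^*_t\|_{L^\infty}<\epsilon$, and have transition gauge $\Gamma^E_w$ within $\epsilon$ of $\Gamma^E_{w^*}$. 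Endow $\mathcal{A}_\epsilon$ with a metric inducing, say, $L^\infty$-weak-$*$ or strong $L^1$ convergence of $(u,u_t)$ on which it is a complete metric space; this uses that all the defining constraints are closed under the chosen convergence (the potential operator $\mathcal{P}_u$ is continuous in $u$, which is essential since the inclusion is nonlocal).

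The heart of the argument is the density lemma, Lemma \ref{lem:density-fact}: the subset of $\mathcal{A}_\epsilon$ consisting of exact solutions of the inclusion on $E$ — i.e.\ those $w$ with $(u_x,v_t-bu-\mathcal{P}_u-F)\in K_0$ a.e.\ on $E$, equivalently $|\Omega^2_T\setminus(F^+_T\cup F^-_T)|=0$ — is residual (contains a dense $G_\delta$). The $G_\delta$ part is routine from lower semicontinuity of $w\mapsto\int_E\operatorname{dist}((u_x,\cdot),K_0)$; the density is the main obstacle. To prove density one takes $w\in\mathcal{A}_\epsilon$ and, on a subregion of $\Omega^2_T$ where $(u_x,\cdot)$ is genuinely in the open part $U_0$, adds a small, highly oscillatory laminate-type perturbation $(u,v)\mapsto(u,v)+(\phi_x,\phi_t)$ supported there, built from rank-one-type building blocks adapted to the two branches $s^-_r,s^+_r$ of $K_0$, so that the perturbed $u_x$ moves toward $K_0$ while the averages, the stream-function relation $v_x=u$, the pinning and boundary matching, and — crucially — the transition gauge and the nonlocal potential $\mathcal{P}_u$ change by less than prescribed amounts (the gauge is essentially preserved because the perturbation splits mass between the two branches in the ratio dictated by $Z_w$). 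Passing finitely many generations of such perturbations and using the technical covering lemma of Section \ref{lem:tech} to patch them on a fine grid, one drives the $L^1$-distance of $(u_x,\cdot)$ to $K_0$ below any threshold while staying in $\mathcal{A}_\epsilon$, yielding density.

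Finally, by Baire's theorem $\mathcal{A}_\epsilon$ contains (uncountably many) $w=(u,v)$ that are exact solutions of (\ref{initial-form-matrix-set}), hence $u$ is a Lipschitz solution of (\ref{p-main})-(\ref{p-main-ib-condition}); the defining properties of $\mathcal{A}_\epsilon$ give (a), (c), and the first inequality in (d), while the residual-exactness plus $u_x\notin(s_1,s_2)$ (since on $E$ the image lies in $K_0\subset\{s\le s_1\}\cup\{s\ge s_2\}$ and off $E$ it equals $u^*_x$ which avoids that gap by construction of $\bar\sigma$) gives (b) and that $u$ is a forward solution. The measure identities $|F^\pm_T|=\gamma^E_u|\Omega^2_T|$ etc.\ follow from the definition of the transition gauge (\ref{fractional-distance})-(\ref{transition-gauge}) once $u_x$ takes values a.e.\ in the two branches $[s^-_{r_1},s^-_{r_2}]\cup[s^+_{r_1},s^+_{r_2}]$, and $|\gamma^E_u-\Gamma^E_{w^*}|<\epsilon$ is inherited from membership in $\mathcal{A}_\epsilon$; infinitude follows because the construction can be carried out with uncountably many distinct oscillation patterns. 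I expect the density lemma — specifically, designing the localized perturbations so that they simultaneously respect the stream-function constraint, the boundary/pinning conditions, the $u_t$-closeness, and the nonlocal gauge/potential control — to be by far the most delicate step, which is why it is deferred to Section \ref{sec:density-proof}.
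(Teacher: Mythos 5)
Your plan follows the paper's route in all essentials: a strictly increasing modification $\tilde\sigma$ of $\sigma$, agreeing with $\sigma$ off $(s^-_{r_1},s^+_{r_2})$, gives a classical solution $u^*$ of the modified Neumann problem whose stream function $v^*$ makes $w^*=(u^*,v^*)$ a smooth subsolution with $0<\Gamma^E_{w^*}<1$ and (\ref{u-star-near-bdry}); one then works with an admissible class of smooth subsolutions agreeing with $w^*$ outside compact subsets of $\Omega^2_T$, with $L^\infty$-control of $u$ and $u_t$, the nonlocal condition $\mathcal P_u=\mathcal P_{u^*}$ off the perturbed region, and a gauge constraint; density of the $\delta$-approximating subclass is obtained by laminate-type oscillations $h_\tau(t)f_\nu(x)$ patched on a fine grid of squares (the content of Lemma \ref{lem:rank-1-smooth-approx} and Section \ref{sec:density-proof}); and properties (a)--(d) are read off from the limit exactly as you indicate, with $|F^\pm_T|$ computed from the definition of $\gamma^E_u$ once $u_x\in[s^-_{r_1},s^-_{r_2}]\cup[s^+_{r_1},s^+_{r_2}]$ a.e.\ on $E$.

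The one step that would fail as you state it is the functional framework for Baire's theorem. You propose to metrize $\mathcal A_\epsilon$ itself so that it is complete because ``all the defining constraints are closed under the chosen convergence,'' and to get the exact solutions as a dense $G_\delta$ ``from lower semicontinuity'' of $w\mapsto\int_E\operatorname{dist}(\nabla w,K)$. Neither works: under $L^\infty$, $L^1$ or weak-$*$ convergence of $(u,u_t)$ the gradients $u_x,v_t$ only converge weakly along subsequences, so the pointwise constraint that $(u_x,v_t-bu-\mathcal P_u-F)$ lie in the (nonconvex) region $U'$ (or $\bar U'$), and the gauge functional $\Gamma^E_w$, which is a nonlinear function of $u_x,v_t$, are not stable under limits — so $\mathcal A_\epsilon$ is not complete; moreover, if the open constraint really were preserved in the limit you could never land on $K'\subset\partial U'$. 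As for the $G_\delta$ claim, you would need \emph{openness} of the sublevel sets $\{\int<1/k\}$, i.e.\ upper semicontinuity, and the distance functional is semicontinuous in neither direction under these convergences precisely because $K$ is nonconvex and gradients oscillate. The paper's remedy is the standard one and is what your outline needs: take $\mathcal X$ to be the $L^\infty$-closure of $\mathcal A$ (completeness is then free and no gradient constraint is imposed on limit points), use that $\nabla:\mathcal X\to L^1$ is a Baire-one map so its continuity points are residual, and at such a point combine density of $\mathcal A$ in $\mathcal X$ with Lemma \ref{lem:density-fact} to produce $w_j\in\mathcal A_{1/j}$ with $\nabla w_j\to\nabla w$ a.e., forcing $\nabla w\in K((x,t);u)$ a.e.\ on $\Omega^2_T$; infinitude then follows cleanly (if $\mathcal C_\nabla$ were finite, $\mathcal X=\mathcal A\ni w^*$ would make $u^*$ itself such a solution, a contradiction), rather than by appealing to ``uncountably many oscillation patterns.'' With this replacement, your proposal coincides with the paper's proof.
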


Clearly, this theorem implies the second part of Theorem \ref{thm:main-simple}  if the number $\epsilon$ is chosen so that
\[
 0<\epsilon<\min\{\Gamma^E_{w^*},1-\Gamma^E_{w^*}\};
\]
thus such functions $u$ are two-phase forward solutions to problem (\ref{p-main})-(\ref{p-main-ib-condition}).

The proof of  the first part of Theorem \ref{thm:main-detailed} is given in the next section; that of the second part will be given in Section \ref{sec:proof-main}.

\section{Construction of subsolution by a modified problem}\label{sec:prelim}
In this section, we assume that $u_0\in C^{2+\alpha}(\bar\Omega)$ satisfies  conditions (\ref{u_0-compatibility-condition}) and  (\ref{trans-0}).  We also assume that $r_1,r_2$ are any two numbers such that
\[
\sigma(s_2)<r_1<r_2<\sigma(s_1), \quad s^-_{r_1}<u_0'(x_0)<s^+_{r_2}.
\]

\subsection{Modified problem} By elementary calculus with (\ref{flux-condition}), we can construct  a function $\tilde\sigma\in C^{1+\alpha}(\R)$ satisfying
\begin{equation}\label{modified-sigma}
\left\{
\begin{array}{l}
  \tilde\sigma=\sigma\;\;\mbox{on $(-\infty,s^-_{r_1}]\cup[s^+_{r_2},\infty)$,} \\
  \mbox{$\tilde\sigma<\sigma$ on $(s^-_{r_1},s^-_{r_2}]$, $\tilde\sigma>\sigma$ on $[s^+_{r_1},s^+_{r_2})$, and} \\
  \tilde\lambda\le \tilde\sigma' \le\tilde\Lambda\;\;\mbox{in $\R$},
\end{array}\right.
\end{equation}
where $\tilde\Lambda\ge\tilde\lambda>0$ are some constants. (See Figure \ref{fig2}.)

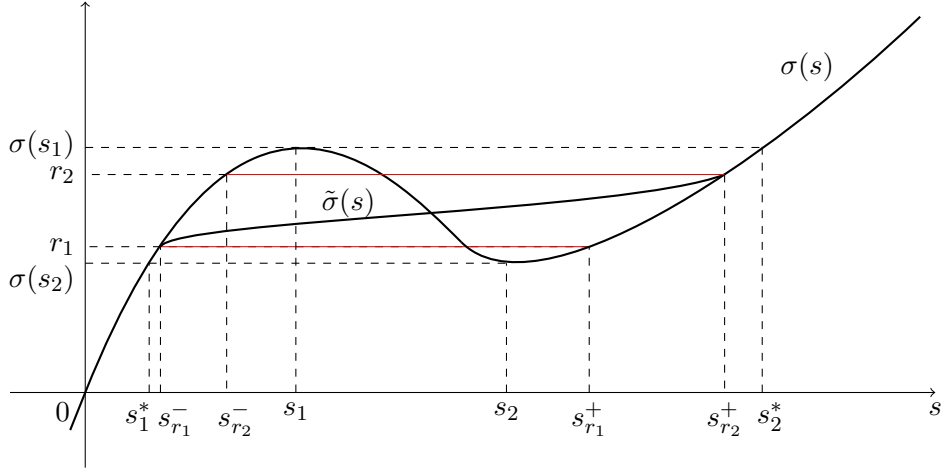
\begin{figure}[ht]
\begin{center}
\begin{tikzpicture}[scale = 1]
    \draw[->] (-1,0) -- (11.3,0);
	\draw[->] (0,-1) -- (0,5.2);
 \draw[dashed] (0,1.72)--(5.6,1.72);
 \draw[dashed] (0,3.26)--(9.05,3.26);
 \draw[dashed] (9,0)--(9,3.3);
    \draw[dashed] (5.6, 0)  --  (5.6, 1.72) ;
   \draw[dashed] (0.85, 0)  --  (0.85, 1.72) ;
\draw[dashed] (1, 0)  --  (1, 1.94) ;
\draw[dashed](6.7, 1.94)-- (0,1.94);
\draw[red] (6.7, 1.94)--(1,1.94) ;
\draw[dashed] (6.7,0)  --  (6.7, 1.94) ;
   \draw[dashed] (2.8, 0)  --  (2.8, 3.3) ;
   \draw[thick] (-0.2,-0.5)  --  (0,0) ;
	\draw[thick]   (0, 0) .. controls (2,5) and  (4, 3)   ..(5,2);
          \draw[thick]   (1, 1.94) .. controls (1.2,2.3) and  (7.2, 2.4)   ..(8.5,2.9);
          %\draw[thick]   (10.0, 4.03) .. controls (10.3,4.3) and  (10.8, 4.4)   ..(11.1,4.5);
\draw[thick]   (5, 2) .. controls  (6, 1) and (9,3) ..(11.1, 5 );
%\draw[thick]   (0.9,1.5) .. controls  (1,1.6) and (2,2.6) ..(11,2.8);
	\draw (11.3,0) node[below] {$s$};
    \draw (-0.3,0) node[below] {{$0$}};
    \draw (9.6,4) node[above] {$\sigma(s)$};
 \draw (0, 3.3) node[left] {$\sigma(s_1)$};
 \draw (0, 1.5) node[left] {$\sigma(s_2)$};
 \draw (0, 1.94) node[left] {$r_1$};
 \draw (0, 2.9) node[left] {$r_2$};
% \draw[dashed] (0,1.5)--(9.2, 1.5);
\draw[dashed] (1.88,2.9)--(1.88,0);
\draw[dashed] (1.88,2.9)--(0,2.9);
\draw[red] (8.5,2.9)--(1.88,2.9);
\draw[dashed] (8.5,2.9)--(8.5, 0);
   \draw (5.6, 0) node[below] {$s_2$};
   \draw (3.5, 2.2) node[above] {$\tilde\sigma(s)$};
   \draw (2.8, 0) node[below] {$s_1$};
   \draw (0.7, 0) node[below] {$s_1^*$};
   \draw (1.2, 0) node[below] {$s_{r_1}^-$};
     \draw (2, 0) node[below] {$s_{r_2}^-$};
   \draw (9.1, 0) node[below] {$s_2^*$};
   %\draw (10.0, 0) node[below] {$s_3$};
   \draw (8.5, 0) node[below] {$s_{r_2}^+$};
   \draw (6.7, 0) node[below] {$s_{r_1}^+$};
% \draw[dashed] (0.9,0)--(0.9, 1.5);
    \end{tikzpicture}
%\end{center}
\caption{\emph{The modified function  $\tilde\sigma$,   constructed as in (\ref{modified-sigma}),   is strictly increasing on $\R$ and agrees with $\sigma$ on $(-\infty,s^-_{r_1}]\cup[s^+_{r_2},\infty)$. The set $K'$ defined in (\ref{set-K'U'}) below consists of the two pieces of the graph of $\sigma$ over $[s_{r_1}^-,s_{r_2}^-]$ and $[s_{r_1}^+,s_{r_2}^+],$ while the set $U'$ in (\ref{set-K'U'})  is the open set bounded by $K'$ and the two parallel  line segments (in red)  joining  the endpoints of $K'.$ The boundary $\partial U'$ of $U'$ represents a hysteresis loop.}
}
\label{fig2}
\end{center}
\end{figure}

From the standard parabolic theory \cite[Theorem 13.24]{Ln}, we obtain a unique classical solution $u^*\in C^{2+\alpha,1+\frac{\alpha}{2}}(\bar\Omega_T)$ to the initial-boundary value problem
\begin{equation}\label{modified-problem}
\left\{
\begin{array}{ll}
  u_t^*=(\tilde\sigma(u^*_x))_x+b(x,t)u^*_x+c(x,t)u^*+f(x,t) & \mbox{in $\Omega_T$}, \\
  u^*=u_0 & \mbox{on $\Omega\times\{t=0\}$}, \\
  u^*_x=0 & \mbox{on $\partial\Omega\times(0,T)$.}
\end{array}\right.
\end{equation}
We then have from (\ref{coefficient-condition}) that
\begin{equation}\label{equation-u-star}
\begin{split}
(\tilde\sigma(u^*_x)+b u^*+\mathcal{P}_{u^*}+F)_x & = \tilde\sigma(u^*_x)_x + b_x u^* + b u^*_x + d(t)u^*+f \\
& = \tilde\sigma(u^*_x)_x  + b u^*_x + c u^* + f = u^*_t\;\;\;\mbox{in $\Omega_T.$}
\end{split}
\end{equation}

\subsection{Separation of domain} Define the  subsets of $\Omega_T$ by
\begin{equation}\label{sub-domains}
\begin{split}
\Omega^1_T & = \{(x,t)\in\Omega_T \,|\, u^*_x(x,t)<s^-_{r_1} \}, \\
\Omega^2_T & = \{(x,t)\in\Omega_T \,|\, s^-_{r_1}<u^*_x(x,t)<s^+_{r_2} \}, \\
\Omega^3_T & = \{(x,t)\in\Omega_T \,|\, u^*_x(x,t)>s^+_{r_2} \}, \\
\Omega^{0,-}_T & = \{(x,t)\in\Omega_T \,|\, u^*_x(x,t) = s^-_{r_1} \}, \\
\Omega^{0,+}_T & = \{(x,t)\in\Omega_T \,|\, u^*_x(x,t)= s^+_{r_2}\}.
\end{split}
\end{equation}
Note that  $\Omega^2_T\ne\emptyset$ as $s^-_{r_1}<u_0'(x_0)<s^+_{r_2}$. From (\ref{modified-problem}), we easily have
\begin{equation}\label{u-star-near-bdry}
\left\{
\begin{array}{l}
  ((0,\delta^*)\cup(1-\delta^*,1))  \times(0,T)\subset \Omega^1_T\;\;\mbox{for some $\delta^*>0$}, \\
  \partial\Omega^1_T\cap\{(x,0)\,|\,x\in\Omega\}  = \{(x,0)\,|\,x\in\Omega, u_0'(x)<s^-_{r_1}\}, \\
  \partial\Omega^2_T\cap\{(x,0)\,|\,x\in\Omega\}  = \{(x,0)\,|\,x\in\Omega, s^-_{r_1}<u_0'(x)<s^+_{r_2}\}, \\
  \partial\Omega^3_T\cap\{(x,0)\,|\,x\in\Omega\}  = \{(x,0)\,|\,x\in\Omega, u_0'(x)>s^+_{r_2}\}.
\end{array}\right.
\end{equation}

\subsection{Construction of subsolution} Define an  \emph{auxiliary function} $v^*$  by
\[
\begin{split}
v^*(x,t)= & \int_0^t \big( \tilde\sigma(u^*_x(x,\tau)) +b(x,\tau)u^*(x,\tau)+\mathcal{P}_{u^*}(x,\tau)+F(x,\tau) \big)\,d\tau \\
& + \int_0^x u^*(y,0)\,dy \quad \forall (x,t)\in\Omega_T.
\end{split}
\]
From (\ref{equation-u-star}), the function $w^*:=(u^*,v^*)$ satisfies that in $\Omega_T$,
\begin{equation}\label{equation-w-star}
\left\{
\begin{array}{l}
  v^*_t-b u^*-\mathcal{P}_{u^*}-F=\tilde\sigma(u_x^*), \\
  v^*_x=u^*,
\end{array}\right.
\end{equation}
and hence from (\ref{modified-sigma}), $w^*\in C^{2,1}(\bar\Omega_T)\times C^{3,1}(\bar\Omega_T)$ is a subsolution of (\ref{initial-form-matrix-set}). Clearly,  $E=\Omega_T^2\subset O^{w^*}$, and it follows
from (\ref{equation-w-star}) that  the number $\Gamma_{w^*}^E$   is given by
\[
\Gamma_{w^*}^E=\frac{1}{|\Omega^2_T|}\int_{\Omega^2_T} \frac{u_x^*-s^-_{\tilde\sigma(u^*_x)}}{s^+_{\tilde\sigma(u^*_x)}-s^-_{\tilde\sigma(u^*_x)}}\,dxdt,
\]
which lies in $(0,1)$.

\section{Admissible set and density lemma}\label{admissible-set}
We use the same  notation as in the previous section and, for simplicity, in what follows, we say that an open set $G\subset \R^2$  is \emph{regular} if $|\partial G|=0$.

\subsection{Related matrix sets} Define the sets (see Figure \ref{fig2})
\begin{equation}\label{set-K'U'}
\begin{split}
K' & = \{ (s,\sigma(s))\in\R^2\,|\, s\in[s^-_{r_1},s^-_{r_2}]\cup[s^+_{r_1},s^+_{r_2}] \}, \\
U' & = \{ (s,r)\in\R^2\,|\, r_1<r<r_2, s^-_{r}<s<s^+_r \}.
\end{split}
\end{equation}
For each $(x,t)\in\Omega_T$ and each $u\in W^{1,\infty}(\Omega_T)$, define the matrix sets
\[
\begin{split}
K((x,t);u) & = \left\{ \begin{pmatrix} s & c \\ u(x,t) & q \end{pmatrix}\in\M^{2\times 2} \,\Big|\, \begin{array}{l}
                                           |c|\le m^*, \big(s,q-b(x,t)u(x,t) \\
                                           -\mathcal{P}_u (x,t)-F(x,t)\big)\in K'
                                         \end{array}
               \right\}, \\
U((x,t);u) & = \left\{ \begin{pmatrix} s & c \\ u(x,t) & q \end{pmatrix}\in\M^{2\times 2} \,\Big|\, \begin{array}{l}
                                           |c|< m^*, \big(s,q-b(x,t)u(x,t) \\
                                           -\mathcal{P}_u (x,t) -F(x,t)\big)\in U'
                                         \end{array}
               \right\},
\end{split}
\]
where $m^*:=\|u^*_t\|_{L^\infty(\Omega_T)}+1>0.$

Note that $K((x,t);u)$ is a compact subset of $\M^{2\times2}$ for each $(x,t)\in\Omega_T$ and each $u\in W^{1,\infty}(\Omega_T)$.

\subsection{Admissible  set of subsolutions}  For any fixed $\epsilon >0$,
 we define a set $\mathcal A$ of  \emph{admissible functions}    by
\[
\mathcal{A}=\left\{ \begin{array}{l}
                      w=(u,v)\;\mbox{belongs to} \\
                      C^{2,1}(\bar\Omega_T)\times C^{3,1}(\bar\Omega_T)
                    \end{array}
  \,\Bigg|\, \begin{array}{l}
                                                               \mbox{$w= w^*$ in $\Omega_T\setminus \bar\Omega^w_T$ for some regular }   \\
                                                               \mbox{open set $\Omega^w_T\subset\subset \Omega^2_T=E$,} \\
                                                               \|u-u^*\|_\infty <\epsilon/2,\;\; \|u_t-u^*_t\|_\infty <\epsilon/2,\\
                                                               \nabla w(x,t)\in U((x,t);u)\;\forall(x,t) \in \Omega^2_T, \\
                                                               \mbox{$\mathcal{P}_u= \mathcal{P}_{u^*}$ in $\Omega_T\setminus\bar \Omega^w_T$, $|\Gamma_w^E- \Gamma_{w^*}^E|< {\epsilon}/{2}$}
                                                             \end{array}
 \right\},
\]
where we use $\|\cdot\|_\infty=\|\cdot\|_{L^\infty(\Omega_T)}$ here and below.
We easily see that $\mathcal{A}\ne\emptyset$ as  $w^*\in\mathcal{A}$ and that each function $w$ in $\mathcal A$ is a subsolution of (\ref{initial-form-matrix-set}) with its transition set $O^w$ containing $E=\Omega_T^2.$

%  Let
%\begin{equation}\label{d*}
%0<\epsilon<\min\{\Gamma_{w^*},1-\Gamma_{w^*}\}.
%\end{equation}

%For a later use, we record here some $L^\infty$-bounds for functions in $\mathcal{A}$ and their partial derivatives. Choose any $w=(u,v)\in\mathcal{A}$. The most obvious one is
%\[
%\|u_t\|_{L^\infty(\Omega_T)}\le m^*.
%\]
%Since
%\[
%u(x,t)=\int_0^t u_\tau(x,\tau)\,d\tau + u_0(x)\;\;\forall(x,t)\in\Omega_T,
%\]
%we get
%\[
%\|v_x\|_{L^\infty(\Omega_T)}=\|u\|_{L^\infty(\Omega_T)}\le Tm^* +\|u_0\|_{L^\infty(\Omega)}.
%\]
%It is also easy to see that
%\[
%\|u_x\|_{L^\infty(\Omega_T)} \le \|u^*_x\|_{L^\infty(\Omega_T)} + s^+_{r_2},
%\]
%\[
%\|v_t\|_{L^\infty(\Omega_T)} \le \|v^*_t\|_{L^\infty(\Omega_T)} + r_2+ \|u\|_{L^\infty(\Omega_T)}(\|b\|_{L^\infty(\Omega_T)}+\|d\|_{L^\infty(\Omega)})
%\]
%\[
%\le \|v^*_t\|_{L^\infty(\Omega_T)} + r_2+ (Tm^* +\|u_0\|_{L^\infty(\Omega)})(\|b\|_{L^\infty(\Omega_T)}+\|d\|_{L^\infty(\Omega)})
%\]

For each $ \delta>0$, we define the set $\mathcal{A}_\delta$ of \emph{$\delta$-approximating functions}  by
\[
\mathcal{A}_\delta=\left\{ w\in\mathcal{A} \,\Big|\,  \int_{\Omega^2_T} \mathrm{dist} \big(\nabla w(x,t),K((x,t);u)\big)\,dxdt\le\delta|\Omega^2_T| \right\}.
\]

\subsection{Density lemma} The completion of the proof of Theorem \ref{thm:main-detailed} relies on  the following  pivotal  density lemma.
\begin{lem}[\bf Density Lemma] \label{lem:density-fact}
For each $\delta>0$, $\mathcal A_\delta$ is dense in $\mathcal A$ under the $L^\infty$-norm.
\end{lem}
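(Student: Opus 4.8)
The goal is to show that any admissible $w = (u,v) \in \mathcal{A}$ can be approximated in $L^\infty$ by elements of $\mathcal{A}_\delta$. The idea is the standard one in Baire-category/convex-integration arguments: we need to perturb $w$ on its transition set $E = \Omega_T^2$ so as to push $\nabla w$ closer to the compact set $K((x,t);u)$, while keeping the perturbation small in $L^\infty$, keeping the gradient inside the (open) set $U((x,t);u)$, and—crucially here—preserving the nonlocal potential $\mathcal{P}_u$ as well as the transition gauge $\Gamma_w^E$ up to the prescribed tolerance. The starting observation is that for a.e.\ $(x,t) \in \Omega_T^2$, the point $\nabla w(x,t)$ lies in $U((x,t);u)$, and the "direction" component $(u_x, v_t - bu - \mathcal{P}_u - F) \in U'$ can be written as a convex combination of the two endpoints of $K'$ lying directly above and below it (at fixed height $r = v_t - bu - \mathcal{P}_u - F$), namely the points $(s_r^-, r)$ and $(s_r^+, r)$; this is exactly what the gauge $Z_w$ records. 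We want to replace $\nabla w$ locally by oscillations between matrices whose direction-parts sit near those two endpoints of $K'$, so that the average is unchanged (preserving $u$, $v$, hence $\mathcal{P}_u$ and the gauge) but the distance to $K$ drops.

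The key step is a \emph{localized oscillatory lamination}. On a small space-time rectangle $R \subset\subset \Omega_T^2$ where $\nabla w$ is nearly constant (approximating $w$ by a piecewise-affine-in-a-suitable-sense map using the technical lemma of Section~\ref{lem:tech}, which I will invoke as a black box), one chooses two matrices $A_\pm$ with $A_\pm$ having direction-parts close to $(s_{r}^\pm, r)$, with $A_+ - A_-$ a rank-one matrix of the form $a \otimes e$ for a suitable direction $e$ so that a one-dimensional laminate construction applies, and with the convex combination $\mu A_- + (1-\mu) A_+$ equal to the (average of) $\nabla w$ on $R$ for the appropriate weight $\mu$ dictated by $Z_w$. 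One then uses a standard building block (a Lipschitz, indeed smoothable, map whose gradient takes essentially the two values $A_\pm$ on complementary subsets of prescribed measure fractions $1-\mu$ and $\mu$, vanishing near $\partial R$) to define the perturbation on $R$. Summing over a fine covering of $\Omega_T^2$ by such rectangles gives a new $\tilde w$ that agrees with $w^*$ outside a regular open set compactly contained in $E$, has $\|\tilde w - w\|_\infty$ as small as desired, keeps $\nabla \tilde w \in U((x,t);u)$ (the oscillation amplitude is chosen small relative to the distance from $\nabla w$ to $\partial U$), and has $\int_{\Omega_T^2} \operatorname{dist}(\nabla \tilde w, K((x,t);u)) \le \delta |\Omega_T^2|$ because on most of each rectangle the gradient is within $O(\text{osc})$ of $K'$-points. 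The mean-value choice ensures $\tilde u = u$ up to a uniformly small error, hence $\mathcal{P}_{\tilde u} \approx \mathcal{P}_u$ and $\Gamma_{\tilde w}^E \approx \Gamma_w^E$; one also checks $\|\tilde u - u^*\|_\infty < \epsilon/2$ and $\|\tilde u_t - u^*_t\|_\infty < \epsilon/2$ survive since we only added a small perturbation to an element already satisfying strict inequalities, and $|\Gamma_{\tilde w}^E - \Gamma_{w^*}^E| < \epsilon/2$ likewise.

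\textbf{Main obstacle.} The principal difficulty, and what distinguishes this from the classical local theory (e.g.\ \cite{DM1,MSy}), is the \emph{nonlocality}: the target sets $K((x,t);u)$ and $U((x,t);u)$ depend on $u$ through $\mathcal{P}_u(x,t) = \int_0^x (c - b_y)u \, dy$, so perturbing $u$ on a rectangle $R$ shifts the admissible sets at \emph{all points to the right of $R$}. The resolution is the design constraint already built into $\mathcal{A}$, that $\mathcal{P}_u = \mathcal{P}_{u^*}$ off the modification region: one must arrange each local building block so that it does \emph{not change the $x$-integral of $u$}, i.e.\ $\int (\tilde u - u)(\cdot,t)\,dx$ is handled so that $\mathcal{P}$ is genuinely unchanged outside $\bar\Omega_T^{\tilde w}$, not merely approximately. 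This forces the laminate direction $e$ and the structure of the building block to be chosen so that the $v_x = u$ constraint and the zero-mean-of-$u_x$-perturbation constraint are simultaneously met—effectively the perturbation of $u$ must integrate to zero in $x$ on each time slice within each rectangle. Verifying that such building blocks exist with all the required properties (rank-one connection between $A_\pm$, compatibility with $v_x = u$, zero $x$-mean of the $u$-perturbation, smallness, and the measure-fraction control matching $Z_w$) is the technical heart; I expect it to rest on the technical lemma of Section~\ref{lem:tech} together with a careful but routine rank-one algebra computation exploiting that $K'$ consists of two graph-pieces joined by \emph{parallel} segments (the hysteresis loop), which is precisely what makes the two endpoints $(s_r^\pm, r)$ at each height differ by a horizontal vector and keeps the lamination geometry uniform across $\Omega_T^2$.
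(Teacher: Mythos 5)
Your proposal follows essentially the same route as the paper's proof: freeze the data on finitely many small squares compactly contained in $E=\Omega^2_T$ where $\nabla w$ is nearly constant, and on each square add a compactly supported oscillation built from the technical lemma (Lemma \ref{lem:rank-1-smooth-approx}) that sweeps $u_x$ horizontally, at essentially fixed height $r=v_t-bu-\mathcal{P}_u-F$, toward the two branches of $K'$, with mean-preserving measure fractions and with zero $x$-mean of the $u$-perturbation on every time slice so that $\mathcal{P}$ is exactly unchanged outside the modified region. Your identification of the zero-$x$-mean constraint as the answer to the nonlocality is exactly condition (e) of Lemma \ref{lem:rank-1-smooth-approx} and is the right mechanism.

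Two of your stated justifications, however, would fail as written, and they are precisely where the paper does its real work. First, you assert that $\nabla\tilde w$ stays in $U((x,t);u)$ because ``the oscillation amplitude is chosen small relative to the distance from $\nabla w$ to $\partial U$.'' That cannot be the mechanism: the construction must bring $u_x$ near $s^\pm_r$, i.e.\ near $K'\subset\partial U'$, so the amplitude is of the order of the width of $U'$, not small. What the paper does instead is stop the horizontal sweep at the points of distance exactly $\delta/k$ from $K'$ (see (\ref{density-two-points-move})), note that the whole swept segment then has distance at least $\min\{\delta/k,d'\}$ from $\partial U'$, and make only the \emph{auxiliary} error terms coming from $\psi_t$, $b\varphi_i$ and the shift of $\mathcal{P}_u$ small compared with that quantity; this simultaneously keeps the gradient in $U$ and makes $\operatorname{dist}(\nabla w_\eta,K)$ of order $\delta/k$ on the oscillation plateaus. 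Second, you claim $\Gamma^E_{\tilde w}\approx\Gamma^E_w$ ``hence'' from $\|\tilde u-u\|_\infty$ being small. That inference is false: $Z_w$ depends on $u_x$, which your perturbation changes by order one. The gauge is preserved for a different reason, which is implicit in your choice of the weight $\mu$ via $Z_w$: after the perturbation, $Z\approx 1$ on the plateau near the right branch and $Z\approx 0$ near the left one, so the new gauge on each square is essentially the measure fraction of the right plateau, and by the mean-preserving/endpoint choice this fraction equals, up to controllable errors, the average of $Z_w$ there. Making this quantitative — via the uniform continuity of $s^\pm_r$ in (\ref{density-kappa-condition}) and the interlocking choices of $l$, $k$ and $\epsilon_i$ — is Substep 4-3, by far the longest part of the paper's proof; your sketch needs that argument in place of the $L^\infty$-closeness shortcut.
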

The proof of this lemma is long and will be given in the last part of the paper, Section \ref{sec:density-proof}.

\section{Proof of Theorem \ref{thm:main-detailed}}\label{sec:proof-main}

We continue to use the same notation as above. The first part of Theorem \ref{thm:main-detailed} has already been proved in Section \ref{sec:prelim}. In this section we carry out the remaining part of the proof in the functional framework of  Baire's category method based on Lemma  \ref{lem:density-fact}.

\subsection{Baire's category method}
Let $\mathcal{X}$ denote the closure of $\mathcal A$ in the space $L^\infty(\Omega_T;\R^2)$; then $(\mathcal{X},L^\infty)$ is a nonempty complete metric space. It is clear that $\mathcal A$ is a bounded subset of $w^*+W^{1,\infty}_0(\Omega_T;\R^2)$, and so $\mathcal{X}\subset  w^*+W^{1,\infty}_0(\Omega_T;\R^2)$.

Since the space-time gradient operator $\nabla:\mathcal{X}\to L^1(\Omega_T;\M^{2\times 2})$ is a Baire-one map \cite[Proposition 10.17]{Da}, it follows from the Baire Category Theorem \cite[Theorem 10.15]{Da} that the set of points of discontinuity of the operator $\nabla$, say $\mathcal{D}_\nabla,$ is a set of the first category; thus the set of points at which $\nabla$ is continuous, that is, $\mathcal{C}_\nabla:=\mathcal{X}\setminus\mathcal{D}_\nabla$, is dense in $\mathcal{X}$.

We now proceed to complete the proof of Theorem \ref{thm:main-detailed} assuming the validity of  Lemma \ref{lem:density-fact}.

\begin{pro}  Let $w=(u,v)\in\mathcal{C}_\nabla$. Then $u$ is a Lipschitz solution to problem (\ref{p-main})-(\ref{p-main-ib-condition}) satisfying (a)--(d) of Theorem \ref{thm:main-detailed}.
\end{pro}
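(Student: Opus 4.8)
The plan is to exploit the characterization of $\mathcal{C}_\nabla$ as the set of continuity points of $\nabla$ together with the Density Lemma to show that any $w=(u,v)\in\mathcal{C}_\nabla$ actually satisfies the \emph{exact} inclusion $\nabla w(x,t)\in K((x,t);u)$ for a.e.\ $(x,t)\in\Omega^2_T$, and then to translate this pointwise information into the statements (a)--(d) and into the weak formulation \eqref{def:solution}. The heart of the argument is the first step, which is the standard ``rigidity at continuity points'' phenomenon in Baire's category method: I would fix $w\in\mathcal{C}_\nabla$ and, using Lemma \ref{lem:density-fact}, pick for each $k\in\N$ a function $w_k\in\mathcal{A}_{1/k}$ with $\|w_k-w\|_{L^\infty}\to 0$; since $\nabla$ is continuous at $w$, we get $\nabla w_k\to\nabla w$ in $L^1(\Omega_T;\M^{2\times 2})$, and passing to the limit in the defining inequality of $\mathcal{A}_{1/k}$ gives $\int_{\Omega^2_T}\dist(\nabla w(x,t),K((x,t);u))\,dxdt=0$. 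Here one must check that the integrand passes to the limit: the map $(A,u)\mapsto \dist(A,K((x,t);u))$ is continuous, and $K((x,t);u)$ depends on $u$ only through the nonlocal quantity $bu+\mathcal{P}_u+F$, which is continuous in $u$ uniformly (note $\mathcal{P}_{u_k}=\mathcal{P}_{u^*}$ off $\bar\Omega^{w_k}_T$ and $u_k\to u$ uniformly), so a dominated convergence / uniform continuity argument closes this. Hence $\nabla w\in K((x,t);u)$ a.e.\ in $\Omega^2_T$.

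Next I would unpack what membership in $K((x,t);u)$ means. Writing $\nabla w=\left(\begin{smallmatrix} u_x & u_t\\ v_x & v_t\end{smallmatrix}\right)$, the inclusion forces $v_x=u$ a.e., $|u_t|\le m^*$, and $(u_x,\,v_t-bu-\mathcal{P}_u-F)\in K'$ a.e.\ in $\Omega^2_T$. Since $K'$ is the union of the two graph-pieces of $\sigma$ over $[s^-_{r_1},s^-_{r_2}]$ and $[s^+_{r_1},s^+_{r_2}]$, this immediately yields (b) on $\Omega^2_T$ and, because $w=w^*$ off $\bar\Omega^w_T\subset\subset\Omega^2_T$ with $u^*_x<s^-_{r_1}$ on $\Omega^1_T$ and $u^*_x>s^+_{r_2}$ on $\Omega^3_T$, the first and third assertions of (b), all of (a) regarding $u=u^*$ in $\Omega^1_T\cup\Omega^3_T$ and the $L^\infty$ bounds coming from $w\in\mathcal{X}=\overline{\mathcal{A}}$ (the bounds $\|u-u^*\|_\infty<\epsilon$, $\|u_t-u^*_t\|_\infty<\epsilon$ survive the closure since the strict bounds $\epsilon/2$ in $\mathcal{A}$ pass to $\le\epsilon/2<\epsilon$ in the closure). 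Statement (c) follows because $u_x=u^*_x$ on $\Omega^{0,\pm}_T$ (these sets lie outside $\bar\Omega^w_T$), and \eqref{u-star-near-bdry} is inherited verbatim from $w^*$. For the weak formulation: from $(u_x,v_t-bu-\mathcal{P}_u-F)\in K'\subset\Sigma_0$ on $\Omega^2_T$ and the same relation with $\tilde\sigma=\sigma$ holding off $\Omega^2_T$ (where $w=w^*$ and $u^*_x\notin(s^-_{r_1},s^+_{r_2})$, so $\tilde\sigma(u^*_x)=\sigma(u^*_x)$), we get $v_t=\sigma(u_x)+bu+\mathcal{P}_u+F$ and $v_x=u$ a.e.\ in all of $\Omega_T$; thus \eqref{stream-function-v} holds, so $u$ solves \eqref{p-main} in the sense of distributions, and since $u\in W^{1,\infty}_0$-perturbation of $u^*$ keeps $u_x=0$ on $\partial\Omega$ and $u(\cdot,0)=u_0$, integrating by parts gives exactly \eqref{def:solution}. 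The Neumann/initial data are inherited because $w-w^*\in W^{1,\infty}_0(\Omega_T;\R^2)$ and $\Omega^w_T\subset\subset\Omega^2_T$ keeps the modification away from $t=0$ and $\partial\Omega\times(0,T)$.

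For (d), I would argue as follows: on $\Omega^2_T$ we have $u_x\in[s^-_{r_1},s^-_{r_2}]\cup[s^+_{r_1},s^+_{r_2}]$ a.e., so $\Omega^2_T$ splits (up to a null set) into $F^-_T$ and $F^+_T$; moreover on $O^w\supset E=\Omega^2_T$ the quantity $v_t-bu-\mathcal{P}_u-F=\sigma(u_x)$ lies in $[r_1,r_2]$, so $S^\pm_w=s^\pm_{\sigma(u_x)}$ and the integrand $Z_w$ of \eqref{fractional-distance} equals $1$ exactly on $F^+_T$ and $0$ exactly on $F^-_T$ (since on the right branch $u_x=s^+_{\sigma(u_x)}$ and on the left branch $u_x=s^-_{\sigma(u_x)}$). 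Hence $\gamma^E_u=\Gamma^E_w=|F^+_T|/|\Omega^2_T|$, which gives $|F^+_T|=\gamma^E_u|\Omega^2_T|$ and $|F^-_T|=(1-\gamma^E_u)|\Omega^2_T|$; the bound $|\gamma^E_u-\Gamma^E_{w^*}|<\epsilon$ follows from $|\Gamma^E_w-\Gamma^E_{w^*}|\le\epsilon/2$ in $\mathcal{A}$ passing to the closure (where it becomes $\le\epsilon/2<\epsilon$). Finally, to get \emph{infinitely many} such solutions one observes that $\mathcal{C}_\nabla$ is dense — hence infinite — in the infinite-dimensional space $\mathcal{X}$, and distinct $w$'s give distinct $u$'s because $v$ is determined by $u$ up to a constant; alternatively, rerun the whole construction with a sequence $\epsilon_j\downarrow 0$. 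The main obstacle I anticipate is the careful justification that the exact inclusion $\nabla w\in K((x,t);u)$ is inherited at continuity points despite the \emph{nonlocality} of the matrix set $K((x,t);u)$ in $u$ — one must verify that $\mathcal{P}_{u_k}\to\mathcal{P}_u$ uniformly (which is where the constraint $\mathcal{P}_{u_k}=\mathcal{P}_{u^*}$ off $\bar\Omega^{w_k}_T$ and uniform convergence $u_k\to u$ are used) so that the ``moving target'' $K((x,t);u_k)$ converges to $K((x,t);u)$ in a way compatible with passing to the limit in the distance functional; everything else is bookkeeping with the sets $\Omega^i_T$ and the definitions.
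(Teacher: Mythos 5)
Your proposal is correct and follows essentially the same route as the paper: approximate $w$ by functions $w_j\in\mathcal{A}_{1/j}$ via the density lemma, use continuity of $\nabla$ at $w$ to pass to the limit in the distance inequality (handling the nonlocal dependence of $K((x,t);u)$ through uniform convergence of $\mathcal{P}_{u_j}$), conclude $\nabla w\in K((x,t);u)$ a.e.\ in $\Omega^2_T$, and then unpack this together with $w_j=w^*$ outside $\bar\Omega^{w_j}_T$ to obtain (a)--(d) and the weak formulation by integration by parts with boundary terms inherited from $u^*$. Only note that properties such as $\|u_t-u^*_t\|_\infty\le\epsilon/2$, $|\Gamma^E_w-\Gamma^E_{w^*}|\le\epsilon/2$ and ``$w=w^*$ off $\Omega^2_T$'' are not preserved by the $L^\infty$-closure alone but by the a.e.\ convergence of $\nabla w_j$ available at continuity points, which you indeed have in hand, so this is a matter of wording rather than a gap.
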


\begin{proof} By the density of $\mathcal{A}$ in $\mathcal{X}$, we can choose a sequence $\{\tilde w_j\}_{j\in\N}$ in $\mathcal{A}$ so that $\|\tilde{w}_j-w\|_\infty\to 0$ as $j\to\infty$. By Lemma \ref{lem:density-fact}, for each $j\in\N$, we can choose a function $w_j=(u_j,v_j)\in\mathcal{A}_{1/j}$ with $\|w_j-\tilde w_j\|_\infty<1/j$. As $w_j\to w$ in $\mathcal{X}$ and $w\in\mathcal{C}_\nabla$, we  have that $\nabla w_j\to\nabla w$ in $L^1(\Omega_T;\M^{2\times 2})$ and thus a.e.\,in $\Omega_T$ after passing to a subsequence if necessary.

Let $j\in\N$. Since $w_j\in\mathcal{A}_{1/j}\subset\mathcal{A}$, we have the following:
\begin{equation}\label{property-w_j}
\left\{
\begin{array}{l}
  \mbox{$w_j= w^*$ in $\Omega_T\setminus\bar\Omega^{w_j}_T$ for some regular open set $\Omega^{w_j}_T\subset\subset\Omega^2_T$,} \\
  \nabla w_j(x,t)\in U((x,t);u_j)\;\;\forall(x,t)\in E= \Omega^2_T, \\
  \mbox{$\mathcal{P}_{u_j}=\mathcal{P}_{u^*}$ in $\Omega_T\setminus\bar\Omega^{w_j}_T$,}\\
    \|u_j-u^*\|_\infty <\frac{\epsilon}{2},\; \|(u_j)_t-u^*_t\|_\infty <\frac{\epsilon}{2}, \; |\Gamma^E_{w_j}-\Gamma^E_{w^*}|<\frac{\epsilon}{2}, \;\mbox{and} \\
  \int_{\Omega^2_T}\mathrm{dist} \big( \nabla w_j(x,t), K((x,t);u_j) \big)\,dxdt\le\frac{1}{j}|\Omega^2_T|.
\end{array}\right.
\end{equation}
From this and (\ref{equation-w-star}), we easily see that
\begin{equation}\label{property-w_j-sequel}
\left\{
\begin{array}{l}
\mbox{$(u_j)_x=s^-_{r_1}$   in $\Omega_T^{0,-}$,   $(u_j)_x= s^+_{r_2}$   in $\Omega_T^{0,+},$} \\
  u_j= u^*\;\;\mbox{in $\Omega^1_T\cup\Omega^3_T$,   and} \\
  (v_j)_x=u_j\;\;\mbox{in $\Omega_T$}.
\end{array}\right.
\end{equation}
Also, from (\ref{modified-sigma}), (\ref{equation-w-star}), (\ref{property-w_j}) and the definition of $\Omega^2_T$, it follows that in $\Omega_T\setminus\Omega^2_T$,
\[
(v_j)_t-b u_j-\mathcal{P}_{u_j}-F = v^*_t-b u^*-\mathcal{P}_{u^*} -F = \tilde\sigma(u^*_x) = \sigma((u_j)_x).
\]
Now, let $j\to\infty$; since $\nabla w_j\to\nabla w$  a.e.\,in $\Omega_T$, we have
\begin{equation}\label{w-fn-classical-part}
\left\{
\begin{array}{l}
  \mbox{$u_x=s^-_{r_1}$ a.e. in $\Omega_T^{0,-}$,   $u_x= s^+_{r_2}$ a.e. in $\Omega_T^{0,+},$}  \\
   u= u^*\;\;\mbox{in $\Omega^1\cup\Omega^3_T$},\;\; v_x=u\;\;\mbox{a.e. in $\Omega_T,$}\\
  \|u-u^*\|_\infty \le \frac{\epsilon}{2},\; \|u_t-u^*_t\|_\infty \le \frac{\epsilon}{2}, \; |\Gamma^E_{w}-\Gamma^E_{w^*}|\le \frac{\epsilon}{2}, \;\mbox{and} \\
   v_t-bu-\mathcal{P}_{u}-F = \sigma(u_x)\;\;\mbox{a.e. in $\Omega_T\setminus\Omega^2_T$}.
\end{array}\right.
\end{equation}
Here, we see that (a) and (c) of Theorem \ref{thm:main-detailed} are satisfied.
Also, it follows from (\ref{property-w_j}), the continuity of the distance function and the Dominated Convergence Theorem that
\[
\nabla w(x,t)\in K((x,t);u)\;\;\mbox{a.e. $(x,t)\in\Omega^2_T$}.
\]
This inclusion implies that a.e. in $\Omega^2_T$,
\begin{equation}\label{w-fn-Lipschitz-part}
\left\{
\begin{array}{l}
  u_x\in[s^-_{r_1},s^-_{r_2}]\cup[s^+_{r_1},s^+_{r_2}],  \\
  v_t-bu-\mathcal{P}_{u}-F=\sigma(u_x).
  %v_x=u.
\end{array}\right.
\end{equation}
Thus (b) of Theorem \ref{thm:main-detailed} follows from (\ref{w-fn-classical-part}), (\ref{w-fn-Lipschitz-part}) and the definition of $\Omega^1_T$ and $\Omega^3_T$. From (\ref{property-w_j}), (\ref{w-fn-classical-part}) and (\ref{w-fn-Lipschitz-part}), we have
\begin{equation}\label{w-fn-global}
\mathcal{P}_{u}=\mathcal{P}_{u^*}\;\;\mbox{on $\bar\Omega_T\setminus\Omega^2_T$}, \quad v_t-bu-\mathcal{P}_{u} -F=\sigma(u_x)\;\;\mbox{a.e. in $\Omega_T$}.
\end{equation}
From (\ref{w-fn-Lipschitz-part}) and the validity of (b), we also have
\[
\Gamma^E_w   = \frac{1}{|\Omega^2_T|} \int_{F^+_T\cup F^-_T}\frac{u_x-s^-_{\sigma(u_x)}}{s^+_{\sigma(u_x)}-s^ -_{\sigma(u_x)}}\,dxdt =\frac{|F^+_T|}{|\Omega^2_T|};
\]
that is,
\[
|F^+_T|=\Gamma^E_w|\Omega^2_T|,\quad |F^-_T|=(1-\Gamma^E_w)|\Omega^2_T|,
\]
where $|\Gamma^E_w-\Gamma^E_{w^*}|\le \frac{\epsilon}{2}<\epsilon$. Since $u$ is a Lipschitz solution of equation (\ref{p-main}) as we check below, it follows from (\ref{w-fn-classical-part}), (\ref{w-fn-global}) and the definition of $\gamma^E_u$ in (\ref{transition-gauge}) that $\gamma^E_u=\Gamma^E_w$. Thus (d) holds.

Finally, to verify that $u$ is a Lipschitz solution to (\ref{p-main})-(\ref{p-main-ib-condition}),
let $j\in\N$, $\zeta\in C^\infty(\bar\Omega_T)$ and $s\in[0,T]$.
Note from (\ref{property-w_j-sequel}) and $w_j=w^*$ on $\partial\Omega_T$ that
\[
\begin{split}
\int_0^s & \int_0^1 u_j\zeta_t\,dxdt  =  \int_0^s\int_0^1 (v_j)_x\zeta_t\,dxdt \\
= & -\int_0^s\int_0^1 (v_j)_{xt}\zeta\,dxdt +\int_0^1 \big((v_j)_x(x,s)\zeta(x,s)-(v_j)_x(x,0)\zeta(x,0)\big)\,dx \\
= & \int_0^s\int_0^1 (v_j)_t\zeta_{x}\,dxdt - \int_0^s \big((v_j)_t(1,t)\zeta(1,t)- (v_j)_t(0,t)\zeta(0,t)\big)\,dt \\
& +\int_0^1 \big((v_j)_x(x,s)\zeta(x,s)-(v_j)_x(x,0)\zeta(x,0)\big)\,dx \\
= & \int_0^s\int_0^1 (v_j)_t\zeta_{x}\,dxdt - \int_0^s \big(v^*_t(1,t)\zeta(1,t)- v^*_t(0,t)\zeta(0,t)\big)\,dt \\
& +\int_0^1 \big(u_j(x,s)\zeta(x,s)-u_0(x)\zeta(x,0)\big)\,dx.
\end{split}
\]
Let $j\to\infty$; then by (\ref{coefficient-condition}) and (\ref{w-fn-global}),
\[
\begin{split}
& \int_0^s  \int_0^1  u\zeta_t\,dxdt =  \int_0^s\int_0^1 v_t\zeta_x\,dxdt - \int_0^s \big(v^*_t(1,t)\zeta(1,t)- v^*_t(0,t)\zeta(0,t)\big)\,dt \\
& \quad\quad\quad\quad\quad\quad\quad\;\;\;\, +\int_0^1 \big(u(x,s)\zeta(x,s)-u_0(x)\zeta(x,0)\big)\,dx \\
= & \int_0^s\int_0^1 \big(bu+\mathcal{P}_u+F+\sigma(u_x)\big)\zeta_x\,dxdt \\
& - \int_0^s \big(v^*_t(1,t)\zeta(1,t)- v^*_t(0,t)\zeta(0,t)\big)\,dt   +\int_0^1 \big(u(x,s)\zeta(x,s)-u_0(x)\zeta(x,0)\big)\,dx \\
= & \int_0^s\int_0^1 \big(\sigma(u_x)\zeta_x -(b u_x+c u+f)\zeta \big)\,dxdt + \int_0^1 \big(u(x,s)\zeta(x,s)-u_0(x)\zeta(x,0)\big)\,dx, \\
\end{split}
\]
where the last equality comes from the observation through (\ref{equation-w-star}) that
\[
\begin{split}
\int_0^s \big( & v^*_t(1,t)\zeta(1,t)- v^*_t(0,t)\zeta(0,t)\big)\,dt \\
& = \int_0^s \Big( \big(b(1,t)u^*(1,t)+\mathcal{P}_{u^*}(1,t)+F(1,t) +\tilde\sigma(u^*_x(1,t))\big) \zeta(1,t) \\
&\quad - \big(b(0,t)u^*(0,t)+\mathcal{P}_{u^*}(0,t)+F(0,t) +\tilde\sigma(u^*_x(0,t))\big) \zeta(0,t)\Big)\,dt \\
& = \int_0^s \Big( \big(b(1,t)u(1,t)+\mathcal{P}_{u}(1,t)+F(1,t) \big) \zeta(1,t) \\
&\quad - \big(b(0,t)u(0,t)+\mathcal{P}_{u}(0,t)+F(0,t) \big) \zeta(0,t)\Big)\,dt.
\end{split}
\]
Thus $u$ satisfies (\ref{def:solution}) and hence is a Lipschitz solution to (\ref{p-main})-(\ref{p-main-ib-condition}).
\end{proof}

\subsection{Completion of proof of Theorem \ref{thm:main-detailed}}
Having shown that the first component $u$ of each function $w=(u,v)\in\mathcal{C}_\nabla$ is a Lipschitz solution to problem (\ref{p-main})-(\ref{p-main-ib-condition}) satisfying (a)--(d), it remains to verify that $\mathcal{C}_\nabla$ contains infinitely many functions and that no two different functions in $\mathcal{C}_\nabla$ have the first component that are equal. First, suppose on the contrary that $\mathcal{C}_\nabla$ has only finitely many functions. Then
\[
\mathcal{C}_\nabla=\bar{\mathcal{C}}_\nabla=\mathcal{X}=\bar{\mathcal{A}} =\mathcal{A}\ni w^*=(u^*,v^*).
\]
By the above argument, $u^*$ becomes a Lipschitz solution to  (\ref{p-main})-(\ref{p-main-ib-condition}) satisfying (a)--(d); this is clearly a contradiction. Therefore, $\mathcal{C}_\nabla$ contains infinitely many functions. Next, let $w_1=(u_1,v_1),w_2=(u_2,v_2)\in\mathcal{C}_\nabla.$ It suffices to show that
\[
u_1=u_2\;\;\mbox{in $\Omega_T$}\;\;\Longleftrightarrow\;\; v_1=v_2\;\;\mbox{in $\Omega_T$}.
\]
If $u_1=u_2$ in $\Omega_T$, then by (\ref{w-fn-classical-part}), we have $(v_1)_x=u_1=u_2=(v_2)_x$ a.e. in $\Omega_T$. As $v_1=v_2=v^*$ on $\partial\Omega_T$, we thus have $v_1=v_2$ in $\Omega_T$. The converse is also easy to show; we omit this.

The proof of Theorem \ref{thm:main-detailed} is now complete, of course, upon assuming the validity of Lemma \ref{lem:density-fact}.

\section{A technical lemma}\label{lem:tech} In this section, we equip with an important but technical tool for local patching to be used in the proof of the density lemma, Lemma \ref{lem:density-fact}.  The following result  is a refinement of the $(1+1)$-dimensional version of a combination of \cite[Theorem 2.3 and Lemma 4.5]{KY1}.

\begin{lem}\label{lem:rank-1-smooth-approx}
Let $Q=(x_1,x_2)\times(t_1,t_2)\subset\R^{2}$ be an open rectangle, where $x_2>x_1$ and $t_2>t_1$ are fixed reals.
Given any $\lambda_1>0$, $\lambda_2>0$ and $\epsilon>0$, there exists a function $\omega=(\varphi,\psi)\in C^\infty_c(Q;\R^2)$ such that
\begin{itemize}
\item[(a)] $\|\omega\|_{L^\infty(Q)}<\epsilon$, $\|\varphi_t\|_{L^\infty(Q)}<\epsilon$, $\|\psi_t\|_{L^\infty(Q)}<\epsilon$,
\item[(b)] $-\lambda_1\le\varphi_x\le\lambda_2$ in $Q$,
\item[(c)] $\left\{\begin{array}{l}
              \big| |\{  (x,t)\in Q\, |\, \varphi_x(x,t) =-\lambda_1 \} |-\frac{\lambda_2}{\lambda_1+\lambda_2}|Q|\big| < \epsilon,\\
              \big| |\{  (x,t)\in Q\, |\, \varphi_x(x,t) =\lambda_2 \} |-\frac{\lambda_1}{\lambda_1+\lambda_2}|Q|\big| < \epsilon,
            \end{array}\right.
$
\item[(d)] $\psi_x=\varphi$ in $Q$, and
\item[(e)] $\int_{x_1}^{x_2}\varphi(x,t)\,dx=0$  for all $t_1<t<t_2$.
\end{itemize}
\end{lem}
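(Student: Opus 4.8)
The plan is to build the scalar function $\varphi$ first as (the $x$-derivative of) a ``sawtooth'' profile that oscillates between the two slopes $-\lambda_1$ and $\lambda_2$, and then to recover $\psi$ by $x$-integration; the smooth, compactly supported function $\omega=(\varphi,\psi)$ will be obtained by mollifying this prototype. Concretely, on a horizontal strip I would take $\varphi$ to be built from a fixed scalar profile $g\colon\R\to\R$ that is piecewise affine with the two prescribed slopes, adjusting the lengths of the ``up'' intervals (slope $\lambda_2$) and ``down'' intervals (slope $-\lambda_1$) so that $g$ is periodic: the ratio of the total length at slope $-\lambda_1$ to that at slope $\lambda_2$ must be $\lambda_2:\lambda_1$ in order that $g$ returns to its starting value over a period, which is exactly the proportion demanded in (c). Rescaling $g$ to have period $\eta$ and amplitude $O(\eta)$, one gets $\|g\|_{L^\infty}=O(\eta)$ while $g'\in\{-\lambda_1,\lambda_2\}$ a.e.; choosing $\eta$ small makes the $L^\infty$ bound in (a) for $\varphi$ hold.

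Next I would address the constraints that are special to this $(1+1)$-dimensional, compactly supported setting: (i) $\varphi$ must vanish near $\partial Q$, (ii) the mean-zero condition (e) must hold on every horizontal slice, and (iii) $\varphi_t,\psi_t$ must be small. For (e), since the periodic sawtooth $g$ already has zero mean over each full period, I would restrict the oscillation to a slightly shrunken rectangle $Q'=(x_1+\rho,x_2-\rho)\times(t_1+\rho,t_2-\rho)$ whose horizontal width is an integer multiple of the period $\eta$ — choosing $\eta$ from a suitable sequence $\eta_k\to0$ so that $(x_2-x_1-2\rho)/\eta_k\in\N$ — and then cut off: $\varphi=\chi(t)\,g_\eta(x)$ on $Q'$, extended by $0$, where $\chi$ is a smooth bump in $t$ equal to $1$ on most of $(t_1+\rho,t_2-\rho)$. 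Because $g_\eta$ has exact period $\eta$ and the $x$-window is an integer number of periods, $\int g_\eta(x)\,dx=0$ on each slice, so (e) holds for this pre-mollified $\varphi$; the cutoff in $x$ near the vertical walls is unnecessary if instead one tapers $g$ to $0$ over the first and last period (this perturbs the slopes on a set of measure $O(\eta)|Q|$, which is absorbed into the $\epsilon$ in (c)). The time dependence enters only through the slowly varying $\chi(t)$, so $\varphi_t=\chi'(t)g_\eta(x)=O(\eta)$ is small, giving the $\varphi_t$ bound in (a). Then $\psi(x,t):=\int_{x_1}^x\varphi(y,t)\,dy$ automatically satisfies (d) and, by (e), is itself compactly supported in $x$; it is supported in $t$ wherever $\varphi$ is, so $\omega\in C^\infty_c$ once we mollify, and $\|\psi\|_{L^\infty}\le(x_2-x_1)\|\varphi\|_{L^\infty}=O(\eta)$, $\psi_t=\int_{x_1}^x\varphi_t=O(\eta)$, covering the remaining parts of (a).

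Finally I would mollify. Let $\omega_\eta=(\varphi_\eta,\psi_\eta)$ be the piecewise-smooth prototype above and set $\omega=\omega_\eta*\rho_\tau$ with a standard mollifier $\rho_\tau$, $\tau\ll\eta$; then $\omega\in C^\infty_c(Q;\R^2)$ for $\tau$ small, the relation $\psi_x=\varphi$ and the slice-mean-zero property (e) are preserved by convolution in $(x,t)$ (or one simply redefines $\psi$ as the $x$-integral of the mollified $\varphi$), the $L^\infty$ bounds in (a) are preserved up to $o(1)$, and the pointwise bound $-\lambda_1\le\varphi_x\le\lambda_2$ in (b) is preserved because mollification of a function valued in the interval $[-\lambda_1,\lambda_2]$ stays in that interval. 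The only quantity that is genuinely perturbed by mollification is the size of the level sets $\{\varphi_x=-\lambda_1\}$ and $\{\varphi_x=\lambda_2\}$ in (c): near each ``corner'' of the sawtooth, $\varphi_x$ transitions smoothly from one slope to the other over an $x$-interval of length $O(\tau)$, so the measure lost from each level set is $O(\tau/\eta)\,|Q|$ plus the $O(\rho+ \textrm{taper})$ boundary contributions; choosing first $\eta$ (hence $\rho$, the taper width, and the time-cutoff loss) small and then $\tau\ll\eta$ small makes the deficits in (c) less than $\epsilon$.

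The step I expect to be the main obstacle — really the only nonroutine point — is reconciling the three ``rigid'' requirements simultaneously: the exact slice-mean-zero condition (e), the exact two-valued gradient in (b), and compact support, all while keeping the level-set proportions in (c). Approximate (inequality) versions of (c) give room, but (b), (d), (e) are equalities, so the construction must be arranged so that $\varphi$ genuinely has zero mean on every horizontal line before mollification (achieved by the integer-number-of-periods choice of $\eta$ and by tapering the end periods rather than multiplying by an $x$-cutoff, since a generic $x$-cutoff would destroy both (b) and (e)). Once the prototype is engineered to satisfy (b), (d), (e) on the nose and (c) approximately, the mollification is soft and the estimates are the routine ones sketched above; this is precisely the $(1+1)$-dimensional refinement of \cite[Theorem 2.3 and Lemma 4.5]{KY1} that is being invoked.
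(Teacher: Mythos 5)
Your proposal is correct and follows essentially the same route as the paper: the paper also takes a product ansatz $\varphi(x,t)=h_\tau(t)f_\nu(x)$ with a time cutoff $h_\tau$ and a compactly supported, highly oscillating spatial profile $f_\nu$ with slopes in $[-\lambda_1,\lambda_2]$, near-prescribed level-set proportions, zero integral and small amplitude, then sets $\psi(x,t)=\int_{x_1}^x\varphi(y,t)\,dy$ and chooses $\tau$, $\nu$ small. The only difference is presentational: the paper simply posits the smooth profile $f_\nu$ with these properties, whereas you construct it explicitly by tapering and mollifying a two-slope sawtooth, which is exactly the construction the paper leaves implicit.
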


\begin{proof}
Let $\tau$ and $\nu$ be sufficiently small positive numbers to be specified below. We first choose a function $h_\tau\in C^\infty_c(t_1,t_2)$ so that
\[
\mbox{$0\le h_\tau\le 1$\;\;in $(t_1,t_2)$\;\;and\;\;$h_\tau= 1$\;\;on $[t_1+\tau,t_2-\tau]$}.
\]
We also choose a (highly oscillating) function $f_\nu\in C^\infty_c(x_1,x_2)$ such that
\[
\left\{
\begin{array}{l}
  -\lambda_1\le f'_\nu \le \lambda_2\;\;\mbox{in $(x_1,x_2)$}, \\
  \big||\{x\in(x_1,x_2)\,|\, f_\nu'(x)=-\lambda_1 \} |-\frac{\lambda_2}{\lambda_1+\lambda_2}(x_2-x_1)\big|<\nu, \\
  \big||\{x\in(x_1,x_2)\,|\, f_\nu'(x)=\lambda_2 \} |-\frac{\lambda_1}{\lambda_1+\lambda_2}(x_2-x_1)\big|<\nu, \\
  \int_{x_1}^{x_2} f_\nu(x)\,dx=0, \\
  \|f_{\nu}\|_{L^\infty(x_1,x_2)}<\nu.
\end{array}\right.
\]

Define $\varphi(x,t)=\varphi_{\tau,\nu}(x,t)=h_\tau(t)f_\nu(x)$ for $(x,t)\in Q$; then $\varphi\in C^\infty_c(Q)$ and $\int_{x_1}^{x_2}\varphi(x,t)\,dx=0$ for all $t_1<t<t_2$; so (e) holds.
Note also that $\|\varphi\|_{L^\infty(Q)}<\nu$, $\|\varphi_t\|_{L^\infty(Q)}<\nu\|h_\tau'\|_{L^\infty(t_1,t_2)}$, $-\lambda_1\le\varphi_x\le\lambda_2$ in $Q$,
\[
\begin{split}
\bigg| |\{ & (x,t)\in Q\, |\, \varphi_x(x,t) =-\lambda_1 \} |-\frac{\lambda_2}{\lambda_1+\lambda_2}|Q|\bigg| \\
\le & \big| |\{(x,t)\in Q\,|\, \varphi_x(x,t)  =-\lambda_1 \}| - |\{x\in(x_1,x_2)\,|\, f_\nu'(x)=-\lambda_1 \}\times(t_1,t_2)|\big| \\
& + \bigg||\{x\in(x_1,x_2)\,|\, f_\nu'(x)=-\lambda_1 \}\times(t_1,t_2) -\frac{\lambda_2}{\lambda_1+\lambda_2}|Q|\bigg| \\
< & 2\tau(x_2-x_1)+\nu(t_2-t_1),\;\;\mbox{and}
\end{split}
\]
\[
\bigg| |\{  (x,t)\in Q\, |\, \varphi_x(x,t) =\lambda_2 \} |-\frac{\lambda_1}{\lambda_1+\lambda_2}|Q|\bigg| < 2\tau(x_2-x_1)+\nu(t_2-t_1);
\]
here, we have (b).

Next, define $\psi(x,t)=\psi_{\tau,\nu}(x,t)=\int_{x_1}^x \varphi(y,t)\,dy$ for $(x,t)\in Q$. Then it is easily checked that $\psi\in C^\infty_c(Q)$ (by (e)), $\|\psi\|_{L^\infty(Q)}<\nu(x_2-x_1)$, $\psi_x=\varphi$ in $Q$, and $\|\psi_t\|_{L^\infty(Q)}<\nu(x_2-x_1)\|h_\tau'\|_{L^\infty(t_1,t_2)}$; thus $\omega:=(\varphi,\psi)\in C^\infty_c(Q;\R^2)$, and (d) is satisfied.

In the above context, for any given $\epsilon>0$, we first choose a $\tau>0$ so small that $2\tau(x_2-x_1)<\epsilon/2.$ We then choose a $\nu>0$ in such a way that
\[
\nu(x_2-x_1+1)<\epsilon,\;\;\nu\|h_\tau'\|_{L^\infty(t_1,t_2)}<\epsilon, \;\;\nu(t_2-t_1)<\epsilon/2, \;\;\mbox{and}
\]
\[
\nu(x_2-x_1)\|h_\tau'\|_{L^\infty(t_1,t_2)}<\epsilon.
\]
Then (a) and (c) are also fulfilled.
\end{proof}

\section{Proof of density lemma}\label{sec:density-proof}
As the final task of the paper, we provide a long proof of the density lemma, Lemma \ref{lem:density-fact}.

\subsection*{Proof of Lemma \ref{lem:density-fact}}
 Let $E=\Omega_T^2.$ Fix a  $\delta>0$, and let $w=(u,v)\in\mathcal{A}$;  so
\begin{equation}\label{property-w-in-density}
\left\{
\begin{array}{l}
  \mbox{$w\in C^{2,1}(\bar\Omega_T)\times C^{3,1}(\bar\Omega_T)$,} \\
  \mbox{$w=w^*$ in $\Omega_T\setminus\bar\Omega^w_T$ for some regular open set $\Omega^w_T\subset\subset\Omega^2_T$,} \\
  \mbox{$\nabla w(x,t)\in U((x,t);u)$ $\forall(x,t)\in\Omega^2_T$,} \\
  \|u-u^*\|_\infty <\epsilon/2,\;\; \|u_t-u^*_t\|_\infty <\epsilon/2,\\
  \mbox{$\mathcal{P}_u=\mathcal{P}_{u^*}$ in $\Omega_T\setminus\bar\Omega^w_T$, and $|\Gamma^E_w-\Gamma^E_{w^*}|<{\epsilon}/{2}$.}
\end{array}\right.
\end{equation}
Let $\eta>0$. Our goal is to construct a function $w_\eta=(u_\eta,v_\eta)\in\mathcal{A}_\delta$ with $\|w_\eta-w\|_\infty<\eta$, that is, a function $w_\eta\in C^{2,1}(\bar\Omega_T)\times C^{3,1}(\bar\Omega_T)$ satisfying
\begin{equation}\label{goal-density}
\left\{
\begin{array}{l}
  \mbox{$w_{\eta}=w^*$ in $\Omega_T\setminus\bar\Omega^{w_{\eta}}_T$ for some regular open set $\Omega^{w_{\eta}}_T\subset\subset\Omega^2_T$,} \\
  \mbox{$\nabla w_{\eta}(x,t)\in U((x,t);u_\eta)$ $\forall(x,t)\in\Omega^2_T$,} \\
  \|u_\eta-u^*\|_\infty <\epsilon/2,\;\; \|(u_\eta)_t-u^*_t\|_\infty <\epsilon/2,\\
  \mbox{$\mathcal{P}_{u_{\eta}}=\mathcal{P}_{u^*}$ in $\Omega_T\setminus\bar\Omega^{w_{\eta}}_T$,  $|\Gamma^E_{w_\eta}-\Gamma^E_{w^*}|<\epsilon/2$,} \\
  \int_{\Omega^2_T}\mathrm{dist} \big(\nabla w_{\eta}(x,t),K((x,t);u_{\eta})\big)\,dxdt \le\delta|\Omega^2_T|,\;\mbox{and} \\
  \|w_\eta-w\|_\infty<\eta.
\end{array}\right.
\end{equation}

The construction is so long that we divide it  into several steps.

\subsection*{\underline{\textbf{Step 1}}} First, we choose a regular open set $G\subset\subset \Omega^2_T\setminus\partial\Omega^w_T$ so that
\begin{equation}\label{density-modifiable-part}
\int_{\Omega^2_T\setminus\bar{G}}\mathrm{dist} \big(\nabla w(x,t),K((x,t);u)\big)\,dxdt \le\frac{\delta}{k}|\Omega^2_T|,
\end{equation}
where $k\in\N$ is  to be specified later.

By (\ref{property-w-in-density}), we have
\[
(u_x,v_t-bu-\mathcal{P}_u-F) \in U', \quad |u_t|<m^*\;\;\mbox{on $\bar{G}$};
\]
so
\begin{equation}\label{density-distance}
\begin{split} &d':=\min_{\bar{G}}\mathrm{dist}\big( (u_x,v_t-bu-\mathcal{P}_u-F),\partial U' \big)>0,\\
&m':=m^*-\max_{\bar{G}}|u_t|>0.\end{split}
\end{equation}
By (\ref{property-w-in-density}), we also have
\[
d'':=(\epsilon/2-|\Gamma^E_w-\Gamma^E_{w^*}|)/2>0.
\]
By the uniform continuity of $s^\pm_r$ for $r\in[\sigma(s_2),\sigma(s_1)]$, we can choose a $\kappa>0$ such that
\begin{equation}\label{density-kappa-condition}
\mbox{$|s^\pm_{a}-s^\pm_{b}|\le d''/l$ whenever $a,b \in[\sigma(s_2),\sigma(s_1)]$ and $|a-b|\le\kappa$},
\end{equation}
where $l\in\N$ will be chosen later.

Next, we choose finitely many disjoint open squares $Q_1,\cdots,Q_N\subset G$, parallel to the axes, such that
\begin{equation}\label{density-squre-part}
\int_{G\setminus(\cup_{i=1}^N\bar{Q}_i)}\mathrm{dist} \big(\nabla w(x,t),K((x,t);u)\big)\,dxdt \le\frac{\delta}{k}|\Omega^2_T|.
\end{equation}
Dividing these squares $Q_1,\cdots,Q_N\subset G$ into disjoint open sub-squares if necessary, we can assume that
\begin{equation}\label{density-distance-square}
\begin{split}
&\big| \big(  u_x(x_1, t_1) ,  v_t(x_1,t_1)  -b(x_1,t_1)u(x_1,t_1)  -\mathcal{P}_u(x_1,t_1)-F(x_1,t_1)\big) \\
& - \big(u_x(x_2,t_2) , v_t(x_2,t_2)-b(x_2,t_2)u(x_2,t_2)  -\mathcal{P}_u(x_2,t_2) -F(x_2,t_2)\big) \big| \\
& \;\;\; \le \min\Big\{\frac{\delta}{4k},\frac{d'}{4},\frac{\kappa}{2} \Big\}
\end{split}
\end{equation}
and
\begin{equation}\label{density-distance-square-1}
|u_t(x_1,t_1)-u_t(x_2,t_2)|\le\frac{m'}{4}
\end{equation}
whenever $(x_1,t_1),(x_2,t_2)\in\bar{Q}_i$ and $i\in\{1,\cdots,N\}$.

\subsection*{\underline{\textbf{Step 2}}}  Fix an index $i\in\mathcal{I}:=\{1,\cdots,N\}$, and let us denote by $(x_i,t_i)$ the center of the square $Q_i$. We write
\[
%\begin{split}
(s_i,\gamma_i)  = \big(u_x(x_i,  t_i) ,  v_t(x_i,t_i)  -b(x_i,t_i)u(x_i,t_i)-\mathcal{P}_u(x_i,t_i) -F(x_i,t_i)\big),
\]
\[
c_i  = u_t(x_i,t_i);%,\;\;u_i=u(x_i,t_i),\;\; q_i=v_t(x_i,t_i);
%\end{split}
\]
then $(s_i,\gamma_i)\in U'$ and $|c_i|\le m^*-m'.$

We now split the index set $\mathcal{I}$ into
\[
\mathcal{I}_1 := \{i\in\mathcal{I} \,|\, \mathrm{dist}((s_i,\gamma_i),K')\le\delta/k\},\;\;\mathcal{I}_2:=\mathcal{I}\setminus \mathcal{I}_1.
\]
Let $i\in\mathcal{I}_1$. Choose a point $\bar{s}_i\in [s^-_{r_1},s^-_{r_2}]\cup[s^+_{r_1},s^+_{r_2}]$ so that
\begin{equation}\label{density-point-chosen}
|(s_i,\gamma_i)-(\bar{s}_i,\sigma(\bar{s}_i))|\le\frac{\delta}{k}.
\end{equation}
Let $(x,t)\in Q_i$; then by (\ref{density-distance-square}) and (\ref{density-point-chosen}), we have
\[
\begin{split}
\bigg| &   \begin{pmatrix} u_x(x,t) & u_t(x,t) \\ v_x(x,t) & v_t(x,t) \end{pmatrix}  - \begin{pmatrix} \bar{s}_i & u_t(x,t) \\ u(x,t) & b(x,t)u(x,t)+\mathcal{P}_u(x,t)+F(x,t)+\sigma(\bar{s}_i) \end{pmatrix}\bigg| \\
& \quad\quad =\big|\big(u_x(x,t)-\bar{s}_i , v_t(x,t)-b(x,t)u(x,t)-\mathcal{P}_u(x,t)-F(x,t)-\sigma(\bar{s}_i)\big)\big| \\
& \quad\quad \le\big|\big(u_x(x,t) , v_t(x,t)-b(x,t)u(x,t)-\mathcal{P}_u(x,t)-F(x,t)\big)-(s_i,\gamma_i)\big| \\
& \quad\quad\quad\, +|(s_i,\gamma_i)-(\bar{s}_i,\sigma(\bar{s}_i))|\le\frac{5\delta}{4k},
\end{split}
\]
where
\[
\begin{pmatrix} \bar{s}_i & u_t(x,t) \\ u(x,t) & b(x,t)u(x,t)+\mathcal{P}_u(x,t)+F(x,t)+\sigma(\bar{s}_i) \end{pmatrix} \in K((x,t);u).
\]
So
\[
\int_{Q_i} \mathrm{dist}\big(\nabla w(x,t),K((x,t);u)\big)\,dxdt \le \frac{5\delta}{4k}|Q_i|.
\]
We thus have
\begin{equation}\label{density-close-integral}
\int_{\cup_{i\in\mathcal{I}_1}Q_i} \mathrm{dist}\big(\nabla w(x,t),K((x,t);u)\big)\,dxdt \le \frac{5\delta}{4k}|\Omega^2_T|.
\end{equation}

\subsection*{\underline{\textbf{Step 3}}} Fix an index $i\in\mathcal{I}_2$. Since
 $(s_i,\gamma_i)\in U'$ and $\mathrm{dist}((s_i,\gamma_i),K')>\delta/k$, we can choose two numbers $\lambda_{i,1}>0$ and $\lambda_{i,2}>0$ so that
\begin{equation}\label{density-two-points-move}
\left\{
\begin{array}{l}
  (s_i-\lambda_{i,1},\gamma_i),(s_i+\lambda_{i,2},\gamma_i)\in U', \\
  \mathrm{dist}((s_i-\lambda_{i,1},\gamma_i),K')= \mathrm{dist}((s_i+\lambda_{i,2},\gamma_i),K')=\frac{\delta}{k}, \\
  \mathrm{dist}((s_i+\mu,\gamma_i),K')>\frac{\delta}{k}\;\;\forall \mu\in(-\lambda_{i,1},\lambda_{i,2}).
\end{array}\right.
\end{equation}
%Let us write $d_i=\mathrm{dist}([(s_i-\lambda_{i,1},\gamma_i),(s_i+\lambda_{i,2},\gamma_i), \partial U'])>0$.
For a given $\epsilon_i>0$ to be specified later, we can apply Lemma \ref{lem:rank-1-smooth-approx} to obtain a function $\omega_i=(\varphi_i,\psi_i)\in C^\infty_c(Q_i;\R^2)$ such that
\begin{itemize}
\item[(a)] $\|\omega_i\|_{L^\infty(Q_i)}<\epsilon_i$, $\|(\varphi_i)_t\|_{L^\infty(Q_i)}<\epsilon_i$, $\|(\psi_i)_t\|_{L^\infty(Q_i)}<\epsilon_i$,
\item[(b)] $-\lambda_{i,1}\le(\varphi_i)_x\le\lambda_{i,2}$ in $Q_i$,
\item[(c)] $\left\{\begin{array}{l}
              \big| |\{  (x,t)\in Q_i\, |\, (\varphi_i)_x(x,t) =-\lambda_{i,1} \} |-\frac{\lambda_{i,2}}{\lambda_{i,1}+\lambda_{i,2}}|Q_i|\big| < \epsilon_i,\\
              \big| |\{  (x,t)\in Q_i\, |\, (\varphi_i)_x(x,t) =\lambda_{i,2} \} |-\frac{\lambda_{i,1}}{\lambda_{i,1}+\lambda_{i,2}}|Q_i|\big| < \epsilon_i,
            \end{array}\right.
$
\item[(d)] $(\psi_i)_x=\varphi_i$ in $Q_i$, and
\item[(e)] $\int_{x_{i,1}}^{x_{i,2}}\varphi_i(x,t)\,dx=0$  for all $t_{i,1}<t<t_{i,2}$,
\end{itemize}
where $Q_i=(x_{i,1},x_{i,2})\times(t_{i,1},t_{i,2})$. We then define
\begin{equation}\label{density-definition-w-eta}
w_\eta=(u_\eta,v_\eta)=w+\sum_{i\in\mathcal{I}_2}\omega_i\chi_{Q_i}\;\;\mbox{in $\Omega_T$}.
\end{equation}

\subsection*{\underline{\textbf{Step 4}}} To finish the proof, we now show that upon choosing suitable numbers $l,k\in\N$ and $\epsilon_i>0$ $(i\in\mathcal{I}_2)$, the function $w_\eta=(u_\eta,v_\eta)$ defined in (\ref{density-definition-w-eta}) satisfies the required properties in (\ref{goal-density}). As this step consists of many arguments to verify, we separate it into several substeps.

\subsubsection*{\underline{\textbf{Substep 4-1}}} We begin with relatively easy parts to show.

From (\ref{property-w-in-density}) and (\ref{density-definition-w-eta}), we have
\begin{equation}\label{goal-1}
w_\eta\in C^{2,1}(\bar\Omega_T)\times C^{3,1}(\bar\Omega_T).
\end{equation}

Set $\Omega^{w_\eta}_T=G\cup\Omega^w_T$; then $\Omega^{w_\eta}_T\subset\subset\Omega^2_T$ is a regular open set. From (\ref{property-w-in-density}) and (\ref{density-definition-w-eta}), we also have
\begin{equation}\label{goal-2}
w_\eta=w=w^*\;\;\mbox{in $\Omega_T\setminus\bar\Omega^{w_\eta}_T$}.
\end{equation}

Next, we choose
\begin{equation}\label{density-ep-choice-2}
\mbox{$\epsilon_i<\min\Big\{\eta,\; \frac{\epsilon}{2} -\|u-u^*\|_\infty,\, \frac{\epsilon}{2}-\|u_t-u^*_t\|_\infty\Big \}$\; for all $i\in\mathcal{I}_2$;}
\end{equation}
then  from (a) and (\ref{density-definition-w-eta}) we have
\begin{equation}\label{goal-3}
\begin{cases}
\|w_\eta-w\|_\infty<\eta,\\
\|u_\eta-u^*\|_\infty<{\epsilon}/{2},\\
\|(u_\eta)_t-u^*_t\|_\infty<{\epsilon}/{2}.
\end{cases}
\end{equation}

Note that by (\ref{coefficient-condition}), (e), (\ref{property-w-in-density}) and (\ref{density-definition-w-eta}), for all $(x,t)\in\Omega_T\setminus\bar{\Omega}^{w_\eta}_T$,
\[
\begin{split}
\mathcal{P}_{u_\eta}(x,t) & =d(t)\int_0^x u_\eta(y,t)\,dy\\
& = d(t)\int_0^x u(y,t)\,dy + d(t)\sum_{i\in\mathcal{I}_2}\int_0^x \varphi_i(y,t)\chi_{Q_i}(y,t)\,dy\\
& =\mathcal{P}_u(x,t) =\mathcal{P}_{u^*}(x,t);
\end{split}
\]
that is,
\begin{equation}\label{goal-4}
\mathcal{P}_{u_\eta}=\mathcal{P}_{u^*}\;\;\mbox{in $\Omega_T\setminus\bar{\Omega}^{w_\eta}_T$}.
\end{equation}

\subsubsection*{\underline{\textbf{Substep 4-2}}} In this substep, we show that
\begin{equation}\label{goal-5}
\nabla{w}_\eta(x,t)\in U((x,t);u_\eta)\;\;\forall (x,t)\in\Omega^2_T.
\end{equation}

Note from (\ref{density-distance-square-1}), (\ref{density-definition-w-eta}) and (a) that
\[
\begin{split}
|(u_\eta)_t(x,t)| & =|u_t(x,t)+(\varphi_i)_t(x,t)|\le |u_t(x,t)-c_i|+|c_i|+|(\varphi_i)_t(x,t)| \\
& \le \frac{m'}{4}+m^*-m'+ \epsilon_i< m^*-\frac{m'}{2}\;\;\forall (x,t)\in Q_i,\,\forall i\in\mathcal{I}_2,
\end{split}
\]
where we let
\begin{equation}\label{density-ep-choice-3}
\mbox{$\epsilon_i< m'/4$\; for each $i\in\mathcal{I}_2$.}
\end{equation}
This  implies that
\begin{equation}\label{density-U-inclusion-1}
|(u_\eta)_t(x,t)|< m^*\;\;\forall (x,t)\in\Omega^2_T.
\end{equation}

From (\ref{property-w-in-density}), we have $v_x=u$ in $\Omega^2_T$. Thus by (\ref{density-definition-w-eta}) and (d), we have
\[
\begin{split}
(v_\eta)_x (x,t) & =v_x(x,t) + (\psi_i)_x(x,t) \\
& =u(x,t) + \varphi_i(x,t)=u_\eta(x,t)\;\;\forall (x,t)\in Q_i,
\end{split}
\]
where $i\in\mathcal{I}_2$; that is,
\begin{equation}\label{density-U-inclusion-2}
(v_\eta)_x (x,t)  =u_\eta(x,t)\;\;\forall (x,t)\in \Omega^2_T.
\end{equation}

Let $i\in\mathcal{I}_2$.
From (\ref{density-distance}) and (\ref{density-two-points-move}), we see that
\begin{equation}\label{density-distance-freezed-bar}
\mathrm{dist}\big( [(s_i-\lambda_{i,1},\gamma_i),(s_i+\lambda_{i,2},\gamma_i)] , \partial U' \big)\ge \min\Big\{\frac{\delta}{k},d'\Big\}.
\end{equation}
Let $(x,t)\in Q_i.$ Then by (a), (e) and (\ref{density-distance-square}),
\[
\begin{split}
\big|\big((u_\eta)_x  & (x,t), (v_\eta)_t(x,t)- b(x,t)u_\eta(x,t)-\mathcal{P}_{u_\eta}(x,t) -F(x,t)\big) \\
& - \big(s_i+(\varphi_i)_x(x,t),\gamma_i\big)\big| \\
\le &   \big|\big(u_x(x,t), v_t(x,t)- b(x,t)u(x,t)-\mathcal{P}_{u}(x,t)-F(x,t)\big) - (s_i,\gamma_i)\big| \\
& + \bigg|(\psi_i)_t(x,t) -b(x,t)\varphi_i(x,t) -d(t)\int_{x_{i,1}}^x\varphi_i(y,t)\,dy \bigg| \\
\le & \min\Big\{\frac{\delta}{4k},\frac{d'}{4}\Big\} + \epsilon_i(1 + \|b\|_\infty +\|d\|_{L^\infty(0,T)}) <\min\Big\{\frac{\delta}{2k},\frac{d'}{2}\Big\},
\end{split}
\]
where we let
\begin{equation}\label{density-ep-choice-1}
\epsilon_i< (1 + \|b\|_\infty +\|d\|_{L^\infty(0,T)})^{-1} \min\Big\{\frac{\delta}{4k},\frac{d'}{4}\Big\}.
\end{equation}
By (b), we have $-\lambda_{i,1}\le(\varphi_i)_x(x,t)\le\lambda_{i,2}$, and so it follows from  (\ref{density-distance-freezed-bar}) and the previous inequality that
\[
\big((u_\eta)_x(x,t), (v_\eta)_t(x,t)- b(x,t)u_\eta(x,t)-\mathcal{P}_{u_\eta}(x,t) -F(x,t)\big)\in U'.
\]
Adopting (\ref{density-ep-choice-1}) for all $i\in\mathcal{I}_2$, this inclusion holds for all $(x,t)\in\Omega^2_T.$

This inclusion together with (\ref{density-U-inclusion-1}) and (\ref{density-U-inclusion-2}) implies inclusion (\ref{goal-5}).

\subsubsection*{\underline{\textbf{Substep 4-3}}} Here, we prove that
\begin{equation}\label{goal-7}
|\Gamma^E_{w_\eta}-\Gamma^E_{w^*}|<{\epsilon}/{2}.
\end{equation}

Observe from (\ref{fractional-distance}) that we have
\[
\begin{split}
 \Gamma^E_{w_\eta}-\Gamma^E_w = &   \frac{1}{|\Omega^2_T|} \int_{\Omega^2_T} (Z_{w_\eta}(x,t)-Z_{w}(x,t)) \,dxdt \\
= &  \frac{1}{|\Omega^2_T|} \sum_{i\in\mathcal{I}_2} \int_{Q_i}  (Z_{w+\omega_i}(x,t)-Z_{w}(x,t))\,dxdt \\
= & \frac{1}{|\Omega^2_T|} \sum_{i\in\mathcal{I}_2} \bigg(\int_{Q_{i,1}}  + \int_{Q^+_{i,2}}  + \int_{Q^-_{i,2}} \bigg)(Z_{w+\omega_i}-Z_{w}),
\end{split}
\]
where
\[
\begin{split}
Q_{i,1} & :=\{(x,t)\in Q_i\,|\, (\varphi_i)_x(x,t)\not\in\{-\lambda_{i,1},\lambda_{i,2}\}  \}, \\
Q^+_{i,2} & :=\{(x,t)\in Q_i\,|\, (\varphi_i)_x(x,t)=\lambda_{i,2}  \}, \\
Q^-_{i,2} & :=\{(x,t)\in Q_i\,|\, (\varphi_i)_x(x,t)=-\lambda_{i,1}  \}.
\end{split}
\]
For the second equality above, we have used the facts  from (e) that $\mathcal{P}_{u_\eta}(x,t)=\mathcal{P}_{u}(x,t)$ for all $(x,t)\not\in\cup_{i\in\mathcal{I}_2}Q_i$ and that $\mathcal{P}_{u_\eta}(x,t)=\mathcal{P}_{u+\varphi_i}(x,t)$ for all $(x,t)\in Q_i$ and $i\in\mathcal{I}_2;$ here, $\omega_i$ is defined to be zero outside its compact support.

The trivial part to estimate here follows from (c) as
\begin{equation}\label{goal-7-estimate-1}
\bigg|\int_{Q_{i,1}} (Z_{w+\omega_i}-Z_{w})  \bigg|\le 2|Q_{i,1}|<4\epsilon_i\;\;(i\in\mathcal{I}_2).
\end{equation}

Next, let $i\in\mathcal{I}_2$ and $(x,t)\in Q^-_{i,2}$. Then from (\ref{density-distance-square}), we have at the point $(x,t)$ that
\begin{equation}\label{density-numerator-small}
\begin{split}
|(u+\varphi_i)_x- & S^-_{w+\omega_i}|=\big|(u+\varphi_i)_x - s^-_{(v+\psi_i)_t -b (u +\varphi_i ) -\mathcal{P}_{u+\varphi_i}  -F}\big| \\
= & \big|u_x  -\lambda_{i,1}-s^-_{(v+\psi_i)_t -b (u +\varphi_i ) -\mathcal{P}_{u+\varphi_i}  -F}\big| \\
\le & |u_x -s_i| + |s_i-\lambda_{i,1}-s^-_{\gamma_i}| \\
& + \big|s^-_{\gamma_i} -s^-_{(v+\psi_i)_t -b (u +\varphi_i ) -\mathcal{P}_{u+\varphi_i}  -F}\big| \\
\le & \frac{\delta}{4k}+|s_i-\lambda_{i,1}-s^-_{\gamma_i}|+ \big|s^-_{\gamma_i} -s^-_{(v+\psi_i)_t -b (u +\varphi_i ) -\mathcal{P}_{u+\varphi_i}  -F}\big|.
\end{split}
\end{equation}
By (\ref{density-two-points-move}), we have $|(s_i-\lambda_{i,1},\gamma_i)-(\bar{s},\sigma(\bar{s}))|=\delta/k$ for some  $\bar{s}\in[s^-_{\gamma_i},s_i-\lambda_{i,1}]$. Thus, we have from (\ref{density-kappa-condition}) that
\begin{equation}\label{density-sophistication-1}
|s_i-\lambda_{i,1}-s^-_{\gamma_i}|\le |s_i-\lambda_{i,1}-\bar{s}|+|\bar{s}-s^-_{\gamma_i}| \le\frac{\delta}{k}+\frac{d''}{l},
\end{equation}
where we let $k\in\N$ satisfy
\begin{equation}\label{density-k-choice-2}
\frac{\delta}{k}\le\kappa.
\end{equation}
Likewise, we also have
\begin{equation}\label{density-sophistication-3}
|s_i+\lambda_{i,2}-s^+_{\gamma_i}| \le\frac{\delta}{k}+\frac{d''}{l}.
\end{equation}
Also, from (\ref{density-distance-square}) and (a), we have
\[
\begin{split}
|\gamma_i-( & (v+\psi_i)_t -b (u +\varphi_i ) -\mathcal{P}_{u+\varphi_i}  -F )| \\
& \le |\gamma_i-( v_t  -b u   -\mathcal{P}_{u} -F )| \\
& \;\;\;\;\; +|(\psi_i)_t |+|b||\varphi_i| + |d(t)|\bigg|\int_0^x\varphi_i(y,t)\,dy\bigg| \\
& \le \frac{\kappa}{2}+\epsilon_i\big (1+\|b\|_\infty+ \|d\|_{L^\infty(0,T)}\big )\le\kappa,
\end{split}
\]
where we let
\begin{equation}\label{density-ep-choice-5}
\epsilon_i\le 2^{-1}(1+\|b\|_\infty+ \|d\|_{L^\infty(0,T)})^{-1}\kappa.
\end{equation}
With the help of (\ref{density-kappa-condition}), this implies that 
\begin{equation}\label{density-sophistication-2}
|s^-_{\gamma_i}-S^-_{w+\omega_i}|=\big|s^-_{\gamma_i} -s^-_{(v+\psi_i)_t -b (u +\varphi_i ) -\mathcal{P}_{u+\varphi_i}  -F}\big|\le\frac{d''}{l}.
\end{equation}
Thus (\ref{density-numerator-small}), (\ref{density-sophistication-1}) and (\ref{density-sophistication-2}) yield that 
\[
\big| (u+\varphi_i)_x -s^-_{(v+\psi_i)_t -b (u +\varphi_i ) -\mathcal{P}_{u+\varphi_i}  -F} \big|\le \frac{5\delta}{4k}+\frac{2d''}{l}\le\frac{3d''}{l}
\]
if we choose $k\in\N$ so large that
\begin{equation}\label{density-k-choice-1}
\frac{5\delta}{4k}\le\frac{d''}{l}.
\end{equation}
Since $S^+_{w+\omega_i}-S^-_{w+\omega_i}\ge s^+_{r_1}-s^-_{r_2}$, we thus have
\[
Z_{w+\omega_i}(x,t) \le \frac{3d''}{l(s^+_{r_1}-s^-_{r_2})}\quad \forall\, (x,t)\in  Q_{i,2}^-.
\]
Hence
\begin{equation}\label{goal-7-estimate-2}
\bigg|\int_{Q^-_{i,2}} Z_{w+\omega_i}\bigg| \le \frac{3d''}{l(s^+_{r_1}-s^-_{r_2})}|Q^-_{i,2}|.
\end{equation}

Let $i\in\mathcal{I}_2$ and $Q_{i,2}:=Q_{i,2}^+\cup Q_{i,2}^-$. We now estimate the quantity
\[
\bigg| \int_{Q_{i,2}^+} Z_{w+\omega_i} - \int_{Q_{i,2}} Z_{w}\bigg|.
\]

First, we consider
\[
\begin{split}
\bigg|\int_{Q_{i,2}^+} &  Z_{w+\omega_i}(x,t) \,dxdt - \frac{\lambda_{i,1}}{\lambda_{i,1}+\lambda_{i,2}}|Q_i|\bigg| \\
\le & \bigg|\int_{Q_{i,2}^+} \bigg( \frac{u_x+(\varphi_i)_x-S^-_{w+\omega_i}}{S^+_{w+\omega_i}-S^-_{w+\omega_i}} - \frac{s_i+\lambda_{i,2}-s^-_{\gamma_i}}{S^+_{w+\omega_i}-S^-_{w+\omega_i}}\bigg) \,dxdt\bigg|\\
& + \bigg|\int_{Q_{i,2}^+} \bigg( \frac{s_i+\lambda_{i,2}-s^-_{\gamma_i}}{S^+_{w+\omega_i}-S^-_{w+\omega_i}} - \frac{s_i+\lambda_{i,2}-s^-_{\gamma_i}}{s^+_{\gamma_i}-s^-_{\gamma_i}}\bigg) \,dxdt\bigg|\\
& + \bigg| \frac{s_i+\lambda_{i,2}-s^-_{\gamma_i}}{s^+_{\gamma_i}-s^-_{\gamma_i}}|Q^+_{i,2}|- \frac{\lambda_{i,1}}{\lambda_{i,1}+\lambda_{i,2}}|Q_i| \bigg|=:I_1+I_2+I_3.
\end{split}
\]
Here, let $\epsilon_i>0$ satisfy (\ref{density-ep-choice-5}); then as in (\ref{density-sophistication-2}), we have
\begin{equation}\label{density-extra-1}
|S^\pm_{w+\omega_i}-s^\pm_{\gamma_i}|\le\frac{d''}{l}\;\;\mbox{in $Q^+_{i,2}$}.
\end{equation}
Using this, (\ref{density-distance-square}) and the fact that $(\varphi_i)_x=\lambda_{i,2}$ in $Q^+_{i,2}$, we deduce that
\[
I_1\le (s^+_{r_1}-s^-_{r_2})^{-1}\Big(\frac{\delta}{4k}+\frac{d''}{l}\Big)|Q^+_{i,2}|\le \frac{6d''}{5l(s^+_{r_1}-s^-_{r_2})}|Q^+_{i,2}|,
\]
where we let $k\in\N$ fulfill (\ref{density-k-choice-1}). Having the common denominator in the integrand, we  have from (\ref{density-extra-1}) that
\[
I_2\le (s_i+\lambda_{i,2}-s^-_{\gamma_i})(s^+_{r_1}-s^-_{r_2})^{-2}\frac{2d''}{l} |Q^+_{i,2}| \le \frac{2d''(s^+_{r_2}-s^-_{r_1})}{l(s^+_{r_1}-s^-_{r_2})^{2}}|Q^+_{i,2}|.
\]
Note from (\ref{density-sophistication-3}) and (c) that
\[
\begin{split}
I_3 & \le \bigg| \frac{s_i+\lambda_{i,2}-s^-_{\gamma_i}}{s^+_{\gamma_i}-s^-_{\gamma_i}}|Q^+_{i,2}|- |Q^+_{i,2}|\bigg|+\bigg||Q^+_{i,2}|- \frac{\lambda_{i,1}}{\lambda_{i,1}+\lambda_{i,2}}|Q_i| \bigg| \\
& \le \Big(\frac{\delta}{k}+\frac{d''}{l}\Big) (s^+_{\gamma_i}-s^-_{\gamma_i})^{-1}|Q^+_{i,2}|+\epsilon_i \le \frac{9d''}{5l(s^+_{r_1}-s^-_{r_2})}|Q^+_{i,2}|+\epsilon_i,
\end{split}
\]
where we let $k\in\N$ also satisfy (\ref{density-k-choice-2}). Combining the estimates on $I_1,$ $I_2$ and $I_3$, we obtain
\begin{equation}\label{density-intermediate-est}
\begin{split}
\bigg|\int_{Q_{i,2}^+}  &Z_{w+\omega_i} (x,t)\,dxdt -  \frac{\lambda_{i,1}}{\lambda_{i,1}+\lambda_{i,2}}|Q_i|\bigg|  \\
& \quad\quad\le \bigg(\frac{3}{s^+_{r_1}-s^-_{r_2}} + \frac{2(s^+_{r_2}-s^-_{r_1})}{(s^+_{r_1}-s^-_{r_2})^{2}}\bigg)\frac{d''}{l} |Q^+_{i,2}|+\epsilon_i.
\end{split}
\end{equation}

Second, we handle
\[
\begin{split}
\bigg|\int_{Q_{i,2}} & Z_w(x,t)\,dxdt - \frac{\lambda_{i,1}}{\lambda_{i,1}+\lambda_{i,2}}|Q_i|\bigg| \\
\le &  \bigg|\int_{Q_{i,2}}  \bigg (\frac{u_x-S^-_{w}}{S^+_{w}-S^-_{w}} - \frac{\lambda_{i,1}}{S^+_{w}-S^-_{w}}\bigg ) \,dxdt\bigg| \\
& + \bigg|\int_{Q_{i,2}}   \frac{\lambda_{i,1}}{S^+_{w}-S^-_{w}}\,dxdt -\frac{\lambda_{i,1}}{s^+_{\gamma_i}-s^-_{\gamma_i}}|Q_{i,2}|\bigg|\\
& + \bigg|\frac{\lambda_{i,1}}{s^+_{\gamma_i}-s^-_{\gamma_i}}|Q_{i,2}| - \frac{\lambda_{i,1}}{\lambda_{i,1}+\lambda_{i,2}}|Q_i|\bigg|=:J_1+J_2+J_3.
\end{split}
\]
To estimate $J_1$, we first note from (\ref{density-kappa-condition}), (\ref{density-distance-square}) and (\ref{density-sophistication-1}) that for $(x,t)\in Q_{i,2}$,
\[
\begin{split}
|u_x(x,t)-S^-_{w}{(x,t)}-\lambda_{i,1}| & \le |u_x(x,t)-s_i|+|s^-_{\gamma_i}-S^-_{w}{(x,t)}| +| s_i-\lambda_{i,1} - s^-_{\gamma_i}|\\
& \le \frac{\delta}{4k}+\frac{d''}{l}+\Big(\frac{\delta}{k}+\frac{d''}{l}\Big)\le\ \frac{3d''}{l},
\end{split}
\]
where we let $k\in\N$ satisfy (\ref{density-k-choice-2}) and (\ref{density-k-choice-1}). From this, we get
\[
J_1\le \frac{3d''}{l(s^+_{r_1}-s^-_{r_2})}|Q_{i,2}|.
\]
From (\ref{density-kappa-condition}) and (\ref{density-distance-square}), we have
\[
J_2\le \frac{2\lambda_{i,1}d''}{l(s^+_{r_1}-s^-_{r_2})^2}|Q_{i,2}|\le \frac{2d''(s^+_{r_2}-s^-_{r_1})}{l(s^+_{r_1}-s^-_{r_2})^2}|Q_{i,2}|.
\]
To estimate $J_3$, we observe from (\ref{density-sophistication-1}) and (\ref{density-sophistication-3}) that
\[
\begin{split}
|\lambda_{i,1}+\lambda_{i,2}-(s^+_{\gamma_i}-s^-_{\gamma_i})| & =|s_i+\lambda_{i,2} -s^+_{\gamma_i}-(s_i-\lambda_{i,1} -s^-_{\gamma_i})| \\
&\le \frac{2\delta}{k}+ \frac{2d''}{l}\le \frac{18d''}{5l}.
\end{split}
\]
Note from this and (c) that
\[
\begin{split}
J_3 & \le \bigg|\frac{\lambda_{i,1}}{s^+_{\gamma_i}-s^-_{\gamma_i}}|Q_{i,2}| - \frac{\lambda_{i,1}}{\lambda_{i,1}+\lambda_{i,2}}|Q_{i,2}|\bigg| + \bigg|\frac{\lambda_{i,1}}{\lambda_{i,1}+\lambda_{i,2}}|Q_{i,2}| - \frac{\lambda_{i,1}}{\lambda_{i,1}+\lambda_{i,2}}|Q_i|\bigg| \\
& \le \frac{18d''(s^+_{r_2}-s^-_{r_1})}{5l(s^+_{r_1}-s^-_{r_2})^2}|Q_{i,2}|+2\epsilon_i.
\end{split}
\]
By the estimates on $J_1$, $J_2$ and $J_3$, we now have
\[
\begin{split}
\bigg|\int_{Q_{i,2}}  Z_w(x,t) \,dxdt &- \frac{\lambda_{i,1}}{\lambda_{i,1}+\lambda_{i,2}}|Q_i|\bigg| \\
& \le \bigg(\frac{3}{s^+_{r_1}-s^-_{r_2}}+ \frac{28(s^+_{r_2}-s^-_{r_1})}{5(s^+_{r_1}-s^-_{r_2})^2}\bigg)\frac{d''}{l}|Q_{i,2}|+2\epsilon_i.
\end{split}
\]

Combining this estimate with (\ref{density-intermediate-est}), we have
\begin{equation}\label{goal-7-estimate-3}
\begin{split}
\bigg| \int_{Q_{i,2}^+} &Z_{w+\omega_i}  - \int_{Q_{i,2}}Z_{w}\bigg|  \\
& \quad\quad\quad\le \bigg(\frac{6}{s^+_{r_1}-s^-_{r_2}}+ \frac{38(s^+_{r_2}-s^-_{r_1})}{5(s^+_{r_1}-s^-_{r_2})^2}\bigg)\frac{d''}{l} |Q_{i,2}|+3\epsilon_i.
\end{split}
\end{equation}

Thanks to the estimates (\ref{goal-7-estimate-1}), (\ref{goal-7-estimate-2}) and (\ref{goal-7-estimate-3}), it follows that for all $i\in\mathcal{I}_2,$
\[
\begin{split}
\bigg|\bigg ( \int_{Q_{i,1}}+&\int_{Q^+_{i,2}}+\int_{Q^-_{i,2}}\bigg) (Z_{w+\omega_i} -Z_w) \bigg| \\
& \le \bigg(\frac{9}{s^+_{r_1}-s^-_{r_2}}+ \frac{38(s^+_{r_2}-s^-_{r_1})}{5(s^+_{r_1}-s^-_{r_2})^2}\bigg)\frac{d''}{l}|Q_{i,2}|+7\epsilon_i\\
& \le  \frac{17d''(s^+_{r_2}-s^-_{r_1})}{l(s^+_{r_1}-s^-_{r_2})^2}|Q_{i}|+7\epsilon_i.
\end{split}
\]
Summing this over the indices $i\in\mathcal{I}_2,$ we then have
\[
|\Gamma^E_{w_\eta}-\Gamma^E_w|\le  \frac{17d''(s^+_{r_2}-s^-_{r_1})}{l(s^+_{r_1}-s^-_{r_2})^2} + \frac{7}{|\Omega^2_T|}\sum_{i\in\mathcal{I}_2}\epsilon_i< d''
\]
if $l\in\N$ is taken so large that
\begin{equation}\label{density-l-choice-1}
\frac{17(s^+_{r_2}-s^-_{r_1})}{l(s^+_{r_1}-s^-_{r_2})^2}<\frac{1}{2},
\end{equation}
$k\in\N$ satisfies (\ref{density-k-choice-2}) and (\ref{density-k-choice-1}), and the numbers $\epsilon_i>0$ $(i\in\mathcal{I}_2)$ fulfill (\ref{density-ep-choice-5}) and
\[
\epsilon_i<\frac{d''}{14N}|\Omega^2_T|.
\]
Under these choices of numbers $l,$ $k$ and $\epsilon_i$ $(i\in\mathcal{I}_2)$, we finally have from (\ref{property-w-in-density}) and the definition of $d''$ that
\[
\begin{split}
|\Gamma^E_{w_\eta}-\Gamma^E_{w^*}| & \le |\Gamma^E_{w_\eta}-\Gamma^E_{w}|+|\Gamma^E_{w}-\Gamma^E_{w^*}| \\
& <d'' +|\Gamma^E_{w}-\Gamma^E_{w^*}|=\frac{\epsilon}{4}+\frac{|\Gamma^E_{w}-\Gamma^E_{w^*}|}{2} <\frac{\epsilon}{2};
\end{split}
\]
hence, our goal (\ref{goal-7}) for this substep is indeed achieved.

\subsubsection*{\underline{\textbf{Substep 4-4}}} We now prove
\begin{equation}\label{goal-8}
\int_{\Omega^2_T} \mathrm{dist}\big(\nabla w_\eta(x,t) ,K((x,t);u_\eta)\big)\,dxdt\le \delta|\Omega_T^2|.
\end{equation}

Let $i\in\mathcal{I}_2$.
By (c), we have $|Q_{i,1}|<2\epsilon_i$. Let $(x,t)\in Q_{i,1}$ and $s\in[s^-_{r_1},s^-_{r_2}]\cup[s^+_{r_1},s^+_{r_2}]$; then by (\ref{goal-5}), we have, at the point $(x,t)$,
\[
\begin{split}
  \bigg|    \begin{pmatrix} (u_\eta)_x  & (u_\eta)_t  \\ (v_\eta)_x  & (v_\eta)_t  \end{pmatrix}  &- \begin{pmatrix} s & (u_\eta)_t \\ u_\eta & b u_\eta+\mathcal{P}_{u_\eta} +F +\sigma(s) \end{pmatrix}\bigg| \\
& =\big|\big((u_\eta)_x, (v_\eta)_t-bu_\eta-\mathcal{P}_{u_\eta} -F \big)-(s, \sigma(s))\big| \\
&  \le \mathrm{diam}(U').
\end{split}
\]
Thus, we have
\begin{equation*}%\label{density-close-integral}
\int_{Q_{i,1}} \mathrm{dist}\big(\nabla w_\eta(x,t),K((x,t);u_\eta)\big)\,dxdt \le 2\epsilon_i \mathrm{diam}(U')\le\frac{\delta}{Nk}|\Omega^2_T|,
\end{equation*}
where we let
\begin{equation}\label{density-ep-choice-4}
\epsilon_i\le (\mathrm{diam}(U'))^{-1}\frac{\delta}{2Nk}|\Omega^2_T|.
\end{equation}
Having this choice for all $i\in\mathcal{I}_2$, we get
\begin{equation}\label{density-Q-i-1}
\sum_{i\in\mathcal{I}_2}\int_{Q_{i,1}} \mathrm{dist}\big(\nabla w_\eta(x,t),K((x,t);u_\eta)\big)\,dxdt \le  \frac{\delta}{k}|\Omega^2_T|.
\end{equation}

Let $i\in\mathcal{I}_2$ and $(x,t)\in Q_{i,2}$; then $(\varphi_i)_x(x,t)\in\{-\lambda_{i,1},\lambda_{i,2}\}$. Suppose $(\varphi_i)_x(x,t)=-\lambda_{i,1}.$ By (\ref{density-two-points-move}), we can  choose a number $\bar{s}\in[s^-_{r_1},s^-_{r_2}]$ such that
\[
| (s_i-\lambda_{i,1},\gamma_i) - (\bar{s},\sigma(\bar{s})) |=\frac{\delta}{k}.
\]
Then it follows from (a), (e), (\ref{density-distance-square}) and the previous equality that  at the point $(x,t)$,
\[
\begin{split}
  \bigg|   \begin{pmatrix} (u_\eta)_x  & (u_\eta)_t \\ (v_\eta)_x  & (v_\eta)_t  \end{pmatrix}
& - \begin{pmatrix} \bar{s} & (u_\eta)_t \\ u_\eta  & b u_\eta +\mathcal{P}_{u_\eta} +F +\sigma(\bar{s}) \end{pmatrix}\bigg| \\
&  =\big|\big((u_\eta)_x, (v_\eta)_t-bu_\eta-\mathcal{P}_{u_\eta}-F\big)-(\bar{s}, \sigma(\bar{s}))\big| \\
& \le |(u_x, v_t-bu-\mathcal{P}_{u}-F) - (s_i,\gamma_i)| \\
&\quad\; + | (s_i-\lambda_{i,1},\gamma_i) - (\bar{s},\sigma(\bar{s})) | + |(\psi_i)_t|+|b||\varphi_i| \\
&  \quad\; +|d(t)|\bigg|\int_{x_{i,1}}^x\varphi_i(y,t)\,dy \bigg| \\
&  \le \frac{\delta}{4k} + \frac{\delta}{k} + \epsilon_i (1+\|b\|_\infty+\|d\|_{L^\infty(0,T)}) \le \frac{3\delta}{2k},
\end{split}
\]
where we let $\epsilon_i$ satisfy (\ref{density-ep-choice-1}). The same can be shown in the case that $(\varphi_i)_x(x,t)=\lambda_{i,2}$; we omit the details. Thus we get
\[
\int_{Q_{i,2}} \mathrm{dist}\big(\nabla w_\eta(x,t),K((x,t);u_\eta)\big)\,dxdt \le \frac{3\delta}{2k} |Q_{i,2}|,
\]
and so
\begin{equation}\label{density-Q-i-2}
\sum_{i\in\mathcal{I}_2}\int_{Q_{i,2}} \mathrm{dist}\big(\nabla w_\eta(x,t),K((x,t);u_\eta)\big)\,dxdt \le \frac{3\delta}{2k} |\Omega^2_T|.
\end{equation}

Gathering all of (\ref{density-modifiable-part}), (\ref{density-squre-part}), (\ref{density-close-integral}), (\ref{density-Q-i-1}) and  (\ref{density-Q-i-2}), we obtain
\begin{equation*}%\label{goal-6}
\begin{split}
\int_{\Omega^2_T} &\mathrm{dist}\big(\nabla w_\eta(x,t),K((x,t);u_\eta)\big)\,dxdt \\
= & \int_{\Omega^2_T\setminus\bar{G}} \mathrm{dist}\big(\nabla w(x,t),K((x,t);u)\big)\,dxdt \\
& + \int_{G\setminus(\cup_{i=1}^N\bar{Q}_i)} \mathrm{dist}\big(\nabla w(x,t),K((x,t);u)\big)\,dxdt \\
& + \int_{\cup_{i\in\mathcal{I}_1}Q_i} \mathrm{dist}\big(\nabla w(x,t),K((x,t);u)\big)\,dxdt \\
& + \sum_{i\in\mathcal{I}_2}\int_{Q_{i,1}} \mathrm{dist}\big(\nabla w_\eta(x,t),K((x,t);u_\eta)\big)\,dxdt \\
& + \sum_{i\in\mathcal{I}_2}\int_{Q_{i,2}} \mathrm{dist}\big(\nabla w_\eta(x,t),K((x,t);u_\eta)\big)\,dxdt \\
\le & \Big(\frac{\delta}{k}+\frac{\delta}{k} + \frac{5\delta}{4k}+\frac{\delta}{k} +\frac{3\delta}{2k}\Big)|\Omega^2_T| = \frac{23\delta}{4k}|\Omega^2_T|<\delta|\Omega^2_T|,
\end{split}
\end{equation*}
where we let $k\ge6$; hence, (\ref{goal-8}) is satisfied.

\subsubsection*{\underline{\textbf{Substep 4-5}}}
In conclusion, let us take an $l\in\N$ satisfying (\ref{density-l-choice-1}), choose a $k\in\N$ to fulfill $k\ge 6$ and (\ref{density-k-choice-1}) and numbers $\epsilon_i>0$ $(i\in\mathcal{I}_2)$ so that (\ref{density-ep-choice-2}), (\ref{density-ep-choice-3}), (\ref{density-ep-choice-1}), (\ref{density-ep-choice-5}) and (\ref{density-ep-choice-4}) are satisfied. Then we have (\ref{goal-1}), (\ref{goal-2}), (\ref{goal-3}), (\ref{goal-4}), (\ref{goal-5}), (\ref{goal-7}) and (\ref{goal-8}); that is, the function $w_\eta=(u_\eta,v_\eta)$ indeed fulfills all the desired properties in (\ref{goal-density}).

The proof of Lemma \ref{lem:density-fact}  is now complete.

\end{document}